\newcommand{\isomto}{\overset{\thicksim}{\longrightarrow}}
\newcommand{\surjto}{\relbar\joinrel\twoheadrightarrow}
\newcommand{\longto}{\longrightarrow}
\newcommand{\shortinjto}{\lhook\joinrel\rightarrow}
\def\bQ{{\mathbf{Q}}}  \def\bZ{{\mathbf{Z}}}
 \def\bP{{\mathbf{P}}} \def\bF{{\mathbf{F}}}
\def\bG{{\mathbf{G}}}  
\def\cO{{\mathcal{O}}}  \def\cV{{\mathcal{V}}}
\def\cHom{{{\mathcal Hom}}}
\def\fp{{\mathfrak{p}}} \def\fm{{\mathfrak{m}}} \def\fq{{\mathfrak{q}}}
\def\fP{{\mathfrak{P}}}
\DeclareMathOperator\Pic{{Pic}}
\DeclareMathOperator\Spec{{Spec}}
\DeclareMathOperator\End{{End}}
\DeclareMathOperator\Hom{{Hom}}
\DeclareMathOperator\Ext{{Ext}}
\DeclareMathOperator\Gal{{Gal}}
\DeclareMathOperator\dlog{{dlog}}
\DeclareSymbolFont{cyrletters}{OT2}{wncyr}{m}{n}
\DeclareMathSymbol{\Sha}{\mathalpha}{cyrletters}{"58}
\DeclareMathOperator\car{{char}}
\DeclareMathOperator\rk{{rk}}
\def\sep{{\rm sep}}
\def\diff{{\rm d}}
\def\H{{\rm H}}
\def\R{{\rm R}}
\def\fl{{\rm fl}}
\def\et{{\rm et}}
\def\Frob{{\rm Frob}}
\def\cart{{\rm c}}
\def\rmD{{\rm D}}
\def\rmF{{\rm F}}
\def\phi{\varphi}
\theoremstyle{plain}
\newtheorem{theorem}{Theorem}
\newtheorem{lemma}{Lemma}
\newtheorem{proposition}{Proposition}
\newtheorem{question}{Question}
\theoremstyle{definition}
\newtheorem{definition}{Definition}
\newtheorem{example}[subsection]{Example}
\newtheorem{remark}{Remark}
\title{A Herbrand-Ribet theorem for function fields}
\author{Lenny Taelman}
\address{Mathematisch Instituut, P.O. Box 9512, 2300 RA Leiden, The Netherlands}
\email{lenny@math.leidenuniv.nl}
\begin{document}

\begin{abstract}
We prove a function field analogue of the Herbrand-Ribet theorem on cyclotomic number fields. The Herbrand-Ribet theorem can be interpreted as a result about cohomology with
$\mu_p$-coefficients over the splitting field of $\mu_p$, and in our analogue both occurrences of $\mu_p$ are replaced with the $\fp$-torsion scheme of the Carlitz module for a prime $\fp$ in $\bF_q[t]$.
\end{abstract}

\maketitle

\tableofcontents

\section{Introduction and statement of the theorem}

Let $p$ be a prime number, $F=\bQ(\zeta_p)$ the $p$-th cyclotomic number field and
$\Pic \cO_F$ its class group. Then $\bF_p \otimes_\bZ \Pic \cO_F$
decomposes in eigenspaces under the action of the Galois group $\Gal(F/\bQ)$ as
\[
	\bF_p \otimes_\bZ \Pic \cO_F =
	\bigoplus_{n=1}^{p-1} \left(\bF_p \otimes_\bZ \Pic \cO_F\right)(\omega^n)
\]
where $\omega\colon \Gal(F/\bQ) \to \bF_p^\times$ is the cyclotomic character. 

If $n$ is a nonnegative integer we denote by $B_n$ the $n$-th Bernoulli number, defined by the identity
\[
	\frac{z}{\exp z-1} = \sum_{n=0}^\infty B_n \frac{z^n}{n!}.
\]
If $n$ is smaller than $p$ then $B_n$ is $p$-integral. The \emph{Herbrand-Ribet theorem} \cite{Herbrand32} \cite{Ribet76} states that if $n$ is even and $1<n<p$ then
\[
	\left(\bF_p \otimes_\bZ \Pic \cO_F\right)(\omega^{1-n}) \neq 0
	\,\text{ if and only if } \, p \mid B_n.
\]
The \emph{Kummer-Vandiver conjecture} asserts that for all odd $n$ we have
\[
	\left(\bF_p \otimes_\bZ \Pic \cO_F\right)(\omega^{1-n})=0.
\]

\bigskip

In this paper we will state and prove a function field analogue of the Herbrand-Ribet theorem and state an analogue of the Kummer-Vandiver conjecture.

Let $k$ be a finite field of $q$ elements and $A=k[t]$ the polynomial ring in one variable $t$ over $k$. Let $K$ be the fraction field of $A$.

\begin{definition}\label{defcarlitz}
The \emph{Carlitz module} is the $A$-module scheme $C$ over $\Spec A$ whose underlying $k$-vectorspace scheme is the additive group $\bG_a$ and whose $k[t]$-module structure is given by
the $k$-algebra homomorphism
\[
	\phi\colon A \to \End(\bG_a), \, t \mapsto t + F,
\]
where $F$ is the $q$-th power Frobenius endomorphism of $\bG_a$. 
\end{definition}

The Carlitz module is in many ways an $A$-module analogue of the $\bZ$-module scheme $\bG_m$.
For example, the $\Gal(K^\sep/K)$-action on torsion points is formally similar to the
$\Gal(\bar{\bQ}/\bQ)$-action on roots of unity:

\begin{proposition}[{\cite[\S 7.5]{Goss96}}]
Let $\fp\subset A$ be a nonzero prime ideal, then $C[\fp](K^\sep) \cong A/\fp$ and the resulting
Galois representation
\[
	\rho\colon \Gal(K^\sep/K) \longto (A/\fp)^\times.
\]
satisfies
\begin{enumerate}
\item if a prime $\fq\subset A$ is coprime with $\fp$ then
$\rho$ is unramified at $\fq$ and maps a Frobenius element to the class in
$(A/\fp)^\times$ of the monic generator of $\fq$;
\item $\rho( D_\infty ) = \rho( I_\infty ) = k^\times$;
\item $\rho( D_\fp ) = \rho( I_\fp ) = (A/\fp)^\times $,
\end{enumerate}
where the $D$'s and $I$'s denote decomposition and inertia subgroups.\qed
\end{proposition}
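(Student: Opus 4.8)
The plan is to first pin down $C[\fp](K^{\sep})$ and the character $\rho$ by a separability argument, and then to deduce all three statements from a single computation describing the reduction of the Carlitz module modulo a prime. Write $\fp=(f)$ with $f\in A$ monic of degree $d$. In the twisted polynomial ring $A\{F\}$ one has $\phi_f=f+(\text{terms of positive }F\text{-degree})$ with leading term $F^{d}$; hence, viewed as an additive polynomial in the variable, $\phi_f$ is monic of degree $q^{d}$ and has derivative the nonzero constant $f\in K^{\times}$, so it is separable over $K$ and $C[\fp](K^{\sep})$ has exactly $q^{d}$ elements. This set is an $A$-module killed by $\fp$, hence a vector space over the field $A/\fp$, and having cardinality $q^{d}=\#(A/\fp)$ it is one-dimensional; thus $C[\fp](K^{\sep})\cong A/\fp$. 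The Galois action is $A$-linear because $\phi$ is defined over $A\subset K$, which produces $\rho\colon\Gal(K^{\sep}/K)\to\Aut_{A/\fp}(A/\fp)=(A/\fp)^{\times}$.

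The technical heart is the identity, for every nonzero prime $\fq\subset A$ with monic generator $g$ of degree $n$,
\[
	\phi_{g}\equiv F^{\,n}\pmod{\fq},
\]
i.e.\ $\phi_g$ reduces over $A/\fq$ to the $\#(A/\fq)$-power map. This is the Carlitz analogue of the fact that the $a$-th power map on $\bG_m$ becomes Frobenius modulo $a$. I would prove it by noting that $\phi_g\bmod\fq$ is a monic additive polynomial of degree $q^{n}$ whose constant ($F^{0}$) coefficient is the image of $g$ in $A/\fq$, namely $0$; since the Carlitz module has good reduction at $\fq$ and, being of rank one, height one there, its $\fq$-torsion is connected in characteristic $\fq$, so $\phi_g\bmod\fq$ has $0$ as its only root and hence equals $x^{q^{n}}$. (Alternatively one computes directly with the $F$-adic Newton polygon of $\phi_g$; see \cite[\S 7.5]{Goss96}.)

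Granting this identity the remaining points are bookkeeping. For (1): $C[\fp]=\Spec A[x]/(\phi_f(x))$ is finite over $\Spec A$ and étale over $\Spec A[1/f]$ because $\phi_f'=f$ is a unit there, so $\rho$ is unramified away from $\fp$; and at a prime $\fq$ coprime with $\fp$ the Frobenius acts on $C[\fp]$ modulo $\fq$ as the $\#(A/\fq)$-power map $F^{\deg\fq}=\phi_{g}$, which acts on $C[\fp](K^{\sep})\cong A/\fp$ as multiplication by the class of the monic generator $g$, so $\rho(\Frob_\fq)=[g]$. For (3): applying the identity with $\fq=\fp$ shows that every coefficient of $\phi_f$ below the leading one lies in $\fp$, so the $\fp$-adic Newton polygon of the polynomial $\phi_f(x)/x$ (whose roots are the nonzero $\fp$-torsion points) is the single segment from $(0,1)$ to $(q^{d}-1,0)$; hence every such point has $\fp$-adic valuation $1/(q^{d}-1)$, so $\fp$ is tamely and totally ramified of degree $q^{d}-1=\#(A/\fp)^{\times}$ with trivial residue extension. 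Comparing this local degree with the global bound $[K(C[\fp]):K]\le\#(A/\fp)^{\times}$ shows $\rho$ is surjective and $\rho(D_\fp)=\rho(I_\fp)=(A/\fp)^{\times}$.

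It remains to handle $\infty$, which is the other delicate point. Since $(A/\fp)^{\times}$ has order prime to the characteristic, $\infty$ is automatically tamely ramified in $K(C[\fp])$, so $\rho(I_\infty)$ is a cyclic quotient of $k^{\times}$; to see that it equals $k^{\times}\subset(A/\fp)^{\times}$ and that $\rho(D_\infty)=\rho(I_\infty)$, I would invoke the explicit description of the $\fp$-torsion through the Carlitz exponential and the period $\tilde\pi$ (equivalently, the structure theory of cyclotomic function fields): this exhibits $K(C[\fp])$ as a totally ramified extension of degree $q-1$ of its subfield fixed by $k^{\times}$, in which $\infty$ splits completely. The main obstacle throughout is the reduction identity $\phi_g\equiv F^{\deg\fq}\pmod\fq$ together with the local analysis at $\infty$; both rest on the structure theory of the Carlitz module rather than on formal manipulation, while everything else is linear algebra and Newton polygons.
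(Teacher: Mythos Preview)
The paper does not supply its own proof of this proposition: it is stated with a citation to Goss \cite[\S 7.5]{Goss96} and closed with a \qed. Your argument is essentially the standard one found there and is correct in outline: the separability and cardinality count pin down $C[\fp](K^{\sep})\cong A/\fp$; the congruence $\phi_g\equiv F^{\deg\fq}\pmod\fq$ gives (1); the Newton polygon of $\phi_f(x)/x$ gives total tame ramification at $\fp$ and hence (3) together with surjectivity of $\rho$; and the structure of $K(C[\fp])$ over its $k^\times$-fixed subfield handles $\infty$.

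Two minor remarks. First, your justification of $\phi_g\equiv F^{\deg\fq}\pmod\fq$ via ``rank one implies height one, hence the $\fq$-torsion is connected, hence $\bar\phi_g$ has $0$ as its only root'' tacitly uses that the height of a Drinfeld module is an \emph{integer} between $1$ and the rank; this is true but is itself a short lemma, so the argument is not quite self-contained. A cleaner route (also in Goss) is to observe that over $\overline{A/\fq}$ the $q^{\deg\fq}$-Frobenius $F^{\deg\fq}$ commutes with the reduced Carlitz action and hence lies in $\End(\bar\phi)=A$; comparing $F$-degrees, leading coefficients and constant terms forces $F^{\deg\fq}=\bar\phi_g$. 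Second, the assertion that ``$\rho(I_\infty)$ is a cyclic quotient of $k^\times$'' is not something tameness alone gives you (tameness only says $\rho(I_\infty)$ is cyclic of order prime to $p$ inside $(A/\fp)^\times$); that it actually equals $k^\times$ really does require the explicit description via the Carlitz period that you invoke afterwards.
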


Now fix a nonzero prime ideal $\fp \subset A$ of degree $d$. 
Let $L$ be the splitting field of $\rho$. Then $L/K$ is unramified outside $\fp$ and $\infty$, and 
$\rho$ induces an isomorphism $\chi\colon G=\Gal(L/K)\isomto (A/\fp)^\times$.

Let $R$ be the normalization of $A$ in $L$ and $Y=\Spec R$. Let $Y_\fl$ be the flat site on $Y$: the category of schemes locally of finite type over $Y$, with covering families being the jointly surjective families of flat morphisms.

The $\fp$-torsion $C[\fp]$ of $C$ is a finite flat group scheme of rank $q^d$ over $\Spec A$. Let $C[\fp]^\rmD$ be the Cartier dual of $C[\fp]$ and consider the decomposition
\[
	\H^1( Y_\fl, C[\fp]^\rmD ) = \bigoplus_{n=1}^{q^d-1} \H^1( Y_\fl, C[\fp]^\rmD )(\chi^n)
\]
of the $A/\fp$-vector space $\H^1( Y_\fl, C[\fp]^\rmD )$ under the natural action of $G$.

Our analogue of the Herbrand-Ribet theorem will give a criterion for the vanishing of some of these eigenspaces in terms of divisibility by $\fp$ of the so-called Bernoulli-Carlitz numbers,
which we now define.  

The \emph{Carlitz exponential} is the unique power series $e(z) \in K[[z]]$
which satisfies 
\begin{enumerate}
\item $e(z) = z + e_1 z^q + e_2 z^{q^2} + \cdots$ with $e_i\in K$;
\item $e(tz) = e(z)^q + te(z)$.
\end{enumerate}
The Carlitz exponential converges on any finite extension of $K_\infty$ and on an algebraic
closure $\bar K_\infty$ it defines a surjective homomorphism of
$A$-modules
\[
	e \colon \bar K_\infty \surjto C(\bar K_\infty)
\]
whose kernel is discrete and free of rank $1$. We define $BC_n \in K$ by the power series identity
\[
	\frac{z}{e(z)} = \sum_{n=0}^\infty BC_n z^n.
\]
If $n$ is not divisible by $q-1$ then $BC_n$ is zero. If $n$ is less than $q^d$ then $BC_n$ is $\fp$-integral.

\begin{theorem} \label{mainthm}
Let $0 < n < q^d-1$ be divisible by $q-1$.  
Then $\fp$ divides $BC_n$ if and only if $\H^1( Y_\fl, C[\fp]^\rmD)( \chi^{n-1} )$ is nonzero.
\end{theorem}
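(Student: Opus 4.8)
The plan is to reinterpret the eigenspace $\H^1(Y_\fl,C[\fp]^\rmD)(\chi^{n-1})$ as fppf cohomology over $\Spec A$ of the $\fp$-torsion of a tensor power of the Carlitz module, and then to read off its vanishing from the class number formula for Anderson $t$-modules, combined with the Carlitz--Anderson--Thakur evaluation of Carlitz zeta values. Throughout one uses that $R\to A$ is a $G$-Galois cover away from $\fp$ and $\infty$, so that all the objects in sight carry an action of $G\cong(A/\fp)^\times$ and decompose into $\chi$-eigenspaces.

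\emph{Step 1: removing the Cartier dual and descending from $L$ to $K$.} Because $\rho$ has conductor $\fp$ and $\chi$ identifies $G$ with $(A/\fp)^\times$, the Galois module $C[\fp](K^\sep)\cong A/\fp$ is ``of weight one'': for every integer $m$, Anderson's $m$-th tensor power $C^{\otimes m}$ of the Carlitz module has $\fp$-torsion whose generic fibre is $C[\fp]$ twisted by $\chi^{\,m-1}$, compatibly as fppf sheaves away from $\fp$. Feeding this into flat duality on the smooth projective model of $L$ (Artin--Verdier duality for finite flat group schemes, which exchanges $C[\fp]$ and $C[\fp]^\rmD$ and contributes the arithmetic ``shift'') and into the comparison between $G$-eigenspaces of cohomology over $L$ and twisted cohomology over $K$, one expects for each $n$ divisible by $q-1$ an isomorphism
\[
\H^1(Y_\fl,C[\fp]^\rmD)(\chi^{\,n-1}) \;\cong\; \H^\bullet\!\big(\Spec A,\,(C^{\otimes n})[\fp]\big)
\]
in the appropriate cohomological degree, up to local correction terms supported at the two ramified places $\fp$ and $\infty$. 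The hypothesis $0<n<q^d-1$ is precisely what makes these correction terms vanish (it rules out the ``trivial zero'' phenomena at $\fp$ and at $\infty$, and it is also the range in which $BC_n$ is $\fp$-integral).

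\emph{Step 2: the class number formula and the value $BC_n$.} Apply Taelman's class number formula, in the form valid for abelian $t$-modules, to $C^{\otimes n}$ over $A$. This yields an exact sequence relating $\H^\bullet(\Spec A,(C^{\otimes n})[\fp])$ to the $\fp$-torsion and $\fp$-cotorsion of the unit module $U(C^{\otimes n}/A)$ and the class module $H(C^{\otimes n}/A)$, together with the identity
\[
L\big(C^{\otimes n}/A\big) \;=\; \big[\,H(C^{\otimes n}/A)\,\big]\cdot\big(\text{covolume of }U(C^{\otimes n}/A)\big).
\]
By the Anderson--Thakur formula the special value $L(C^{\otimes n}/A)$ is an explicit $K_\infty$-rational multiple of the Carlitz zeta value $\zeta_C(n)$; since $q-1\mid n$ this value equals $BC_n$ times a period-and-factorial factor which, for $n<q^d$, is a $\fp$-adic unit. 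Hence $\fp\mid BC_n$ iff the $\fp$-adic valuation of $L(C^{\otimes n}/A)$ is positive, iff $H(C^{\otimes n}/A)$ has nonzero $\fp$-part, iff $\H^\bullet(\Spec A,(C^{\otimes n})[\fp])\neq 0$; chaining this with Step 1 proves the theorem. (Alternatively one could prove the two implications separately --- ``nonzero $\Rightarrow\fp\mid BC_n$'' by a function-field Stickelberger theorem in the style of Anderson's Gauss--Thakur sums, and the converse by building an explicit nonzero class from Carlitz cyclotomic units --- but routing through the class number formula handles both at once, since here it is an exact equality.)

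\emph{Main obstacle.} The delicate point is Step 1: making the cohomological dictionary precise --- matching flat cohomology of $C[\fp]^\rmD$ over the ramified cover $Y\to\Spec A$ with fppf cohomology of a Carlitz tensor power over $\Spec A$, with the correct Tate twist $\chi^{\,n-1}$, the correct cohomological degree, and a rigorous account of the local contributions at $\fp$ (where $C[\fp]$ is not étale) and at $\infty$ (where $L/K$ is ramified). The equivariant refinement needed to isolate a single $\chi$-eigenspace, rather than only the full $L$-value, must also be set up carefully; once the dictionary is in place, Taelman's formula and the von Staudt--Clausen-type structure of Bernoulli--Carlitz numbers finish the argument.
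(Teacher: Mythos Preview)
Your route is entirely different from the paper's and, as you yourself signal, is a strategy rather than a proof. The paper never leaves $Y$: via Artin--Milne flat duality it realizes $\H^1(Y_\fl,C[\fp]^\rmD)(\chi^{n-1})$ as the kernel of an explicit map $A/\fp\otimes_k\Omega_R^{\cart=1}(\chi^n)\to\Omega_R/\fP^{q^d}\Omega_R$ (Theorem~\ref{thmdiff}); it then manufactures a candidate class $\dlog\lambda_n$ from the $\chi^n$-component of the cyclotomic generator $\lambda$ and reads off its local expansion at $\fP$ through a truncated Carlitz exponential, which is precisely where $BC_n$ appears as a coefficient (Theorem~\ref{thmcandidate}); the residual case $\lambda_n=0$ is settled by a characteristic-zero $L$-function computation in the style of Galovich--Rosen and Goss--Sinnott (Theorem~\ref{thmvanishing}). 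No tensor powers, no descent to $\Spec A$, and no class-number formula enter; the paper even remarks that its comparison with $H(C/R)$ in the final section is \emph{not} used in the proof of Theorem~\ref{mainthm}.

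Beyond the Step~1 obstacle you already flag, Step~2 has gaps of its own that you do not acknowledge. The special value $L(C^{\otimes n}/A)=\zeta_C(n)$ lives in $K_\infty$ and has no ``$\fp$-adic valuation''; the quantity you actually want is the $\fp$-part of the Fitting ideal $[H(C^{\otimes n}/A)]_A\subset A$, and isolating it from the class-number identity requires knowing that the covolume/regulator factor is a $\fp$-unit, which you have not argued and which is not automatic. You also slide over the passage from ``$H(C^{\otimes n}/A)$ has nonzero $\fp$-part'' to ``$\H^\bullet(\Spec A,(C^{\otimes n})[\fp])\neq 0$'': any Kummer-type sequence will interpose the contribution of $U(C^{\otimes n}/A)/\fp$, and for $q-1\mid n$ this unit module need not be trivial, so its effect must be controlled before the chain of equivalences goes through. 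In effect your outline anticipates the later equivariant class-number-formula viewpoint (Angl\`es--Taelman, Fang), but as written both of its two steps are deferred rather than carried out.
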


This is the analogue of the Herbrand-Ribet theorem. The proof is given in section
\ref{secsketch}, modulo auxiliary results which are proven in sections \ref{secconstantcoeff}--\ref{secvanishing}.

In this context a natural analogue of 
the Kummer-Vandiver conjecture is the following:

\begin{question}\label{qvandiver}
Does $\H^1( Y_\fl, C[\fp]^\rmD)( \chi^{n-1} )$ vanish if $n$ is not divisible by $q-1$?
\end{question}

By computer calculation we have verified that these groups indeed vanish for small $q$ and primes $\fp$ of small degree, \emph{see} \S \ref{sectables}.
However, if one believes in a function field version of Washington's heuristics \cite[\S 9.3]{Washington97} then one should expect that counterexamples do exist, but are very sparse, 
making it difficult to obtain convincing numerical evidence towards Question \ref{qvandiver}.

\begin{remark}
Our $BC_n$ differ from the commonly used \emph{Bernoulli-Carlitz} numbers by
a \emph{Carlitz factorial} factor (see for example \cite[\S 9.2]{Goss96}). This factor is innocent for our purposes since it is a unit at $\fp$ for $n<q^d$.
\end{remark}
\begin{remark}
Let $p$ be an odd prime number, $F=\bQ(\zeta_p)$ and $D = \Spec \cO_F$.
Global duality \cite{Mazur73} provides a perfect pairing between
\[
	\bF_p \otimes_\bZ \Pic D = \Ext^2_{D_\et}( \bZ/p\bZ, \bG_{m,D} )
\] 
and
\[
	\H^1( D_\et, \bZ/p\bZ ) = \H^1( D_\fl, \bZ/p\bZ ).
\]
The Herbrand-Ribet theorem thus says that (for $1<n<p-1$ even)
\[
	p \mid B_n \, \text{ if and only if } \,
	\H^1( D_\fl, \mu_p^\rmD )(\chi^{n-1}) \neq 0,
\]
in perfect analogy with the statement of Theorem \ref{mainthm}.
\end{remark}

\begin{remark}
The analogy goes even further.
In \cite{Taelman10b} and \cite{Taelman11} we have defined a finite $A$-module 
$H(C/R)$, analogue of the class group $\Pic \cO_F$, and although we will not use this in the proof
of Theorem \ref{mainthm}, we show in Section \ref{complement} of this paper
 that there are 
canonical isomorphisms
\[
	A/\fp \otimes_A H(C/R) \isomto \Hom( \H^1( Y_\fl, C[\fp]^\rmD ), \bF_p ).
\]
\end{remark}

\begin{remark}
A more naive attempt to obtain a function field analogue of the Herbrand-Ribet theorem would be to compare the $\fp$-divisibility of the Bernoulli-Carlitz numbers with the $p$-torsion of 
the divisor class groups of $Y$ and $L$ (where $p$ is the characteristic of $k$). In other words, to consider cohomology with $\mu_p$-coefficients on the curves defined by the splitting of $C[\fp]$. Several results of this kind have in fact been obtained by  Goss \cite{Goss87}, Gekeler \cite{Gekeler90}, Okada \cite{Okada91}, and Angl\`{e}s \cite{Angles01}, but there appears to be no complete analogue of the Herbrand-Ribet theoreom in this context.

In the proof of Theorem \ref{mainthm} we will see that the $A$-module 
$\H^1( Y_\fl, C[\fp]^\rmD )$ and the group $(\Pic Y)[p]$ are related, and this relationship might shed some new light on these older results. 
\end{remark}

\begin{remark}
I do not know if there is a relation between Question \ref{qvandiver} and Anderson's 
analogue of the Kummer-Vandiver conjecture \cite{Anderson96}.
\end{remark}

\emph{Acknowledgements.}  I am grateful to David Goss for his insistence that I consider the decomposition in isotypical components of the ``class module'' of \cite{Taelman10b} and \cite{Taelman11}, and to the referee for several useful suggestions.  The author is supported by a grant of the Netherlands Organisation for Scientific Research (NWO).

\section{Tables of small irregular primes}\label{sectables}

The results of section \ref{complement} indicate a method for computing 
the modules $\H^1( Y_\fl, C[\fp]^\rmD )$ with their $G$-action in terms of finite-dimensional vector spaces of differential forms on the compactification $X$ of $Y$.

Assisted by the computer algebra package MAGMA we were able to compute them in the following ranges:
\begin{enumerate}
\item $q=2$ and $\deg\fp \leq 5$;
\item $q=3$ and $\deg\fp \leq 4$;
\item $q=4$ and $\deg\fp \leq 3$;
\item $q=5$ and $\deg\fp \leq 3$.
\end{enumerate}

In all these cases $\H^1( Y_\fl, C[\fp]^\rmD )$ turns out to be at most one-dimensional,
and to fall in the $\chi^{n-1}$-component with $n$ divisible by $q-1$ (and hence with $\fp$ dividing $BC_n$.) In particular we have not found any counterexamples to Question \ref{qvandiver}.

In tables 1--3 we list all cases where the cohomology group is nontrivial. 
For $q=5$ and $\deg \fp \leq 3$ the group turns out to vanish. In the middle columns, only $n$ in the range $1 \leq n < q^{\deg \fp}$ are printed.

\begin{table}[h!]\label{tab2}
\begin{tabular}{c|c|c}
 $\fp$ & $\{ n : \fp \mid BC_n \}$ & $\dim \H^1( Y_\fl, C[\fp]^\rmD )$ \\
\hline
$(t^4 + t + 1)$ & $\{ 9 \}$  & $1$ 
\end{tabular}
\smallskip
\caption{All irregular primes in $\bF_2[t]$ of degree at most 5}
\end{table}

\begin{table}[h]\label{tab3}
\begin{tabular}{c|c|c}
 $\fp$ & $\{ n : \fp \mid BC_n \}$ & $\dim \H^1( Y_\fl, C[\fp]^\rmD )$ \\
\hline
$(t^3 - t + 1)$ & $\{ 10 \}$  & $1$ \\ 
$(t^3 - t - 1)$ & $\{ 10 \}$  & $1$ \\ 
$(t^4 - t^3 + t^2 + 1)$ & $\{ 40 \}$ & $1$ \\
$(t^4 - t^2 - 1)$ & $\{ 32 \}$ & $1$ \\
$(t^4 - t^3 - t^2 + t - 1)$ & $\{ 32 \}$ & $1$ \\
$(t^4 + t^3 + t^2 + 1)$ & $\{ 40 \}$ & $1$ \\
$(t^4 + t^3 - t^2 - t - 1)$ & $\{ 32 \}$ & $1$ \\
$(t^4 + t^2 - 1)$ & $\{ 40 \} $ & $1$ 
\end{tabular}
\smallskip
\caption{All irregular primes in $\bF_3[t]$ of degree at most 4 }
\end{table}

\begin{table}[h]\label{tab4}
\begin{tabular}{c|c|c}
 $\fp$ & $\{ n : \fp \mid BC_n \}$ & $\dim \H^1( Y_\fl, C[\fp]^\rmD )$ \\
\hline
$(t^3 + t^2 + t + \alpha)$ & $\{ 33 \}$  & $1$ \\ 
$(t^3 + t^2 + t + \alpha^2)$ & $\{ 33 \}$  & $1$ \\ 
$(t^3 + \alpha)$ & $\{33\}$ & $1$ \\
$(t^3 + \alpha^2)$ & $\{33\}$ & $1$ \\
$(t^3 + \alpha^2 t^2 + \alpha t + \alpha^2)$ & $\{33\}$ & $1$ \\
$(t^3 + \alpha t^2 + \alpha^2 t + \alpha)$ & $\{33\}$ & $1$ \\
$(t^3 + \alpha t^2 + \alpha^2 t + \alpha^2)$ & $\{33\}$ & $1$ \\
$(t^3 + \alpha^2 t^2 + \alpha t + \alpha)$ & $\{33\}$ & $1$ 
\end{tabular}
\smallskip
\caption{All irregular primes in $\bF_4[t]$ of degree at most 3 (with
	$\bF_4=\bF_2(\alpha)$).}
\end{table}

\section{Notation and conventions}

\emph{Basic setup.} $k$ is a finite field of $q$ elements, $p$ its characteristic. $A = k[t]$ and $\fp \subset A$ a nonzero prime. These data are fixed throughout the text. We denote by $d$ the degree of $\fp$, so that $A/\fp$ is a field of $q^d$ elements.

\emph{The Carlitz module.} The Carlitz module is the $A$-module scheme $C$ over $\Spec A$ defined in Definition \ref{defcarlitz}.

\emph{Cyclotomic curves and fields.} $K$ is the fraction field of $A$, and $L/K$ the splitting field of $C[\fp]_K$. The integral closure of $A$ in $L$ is denoted by $R$, and $Y=\Spec R$. We denote by $\fP \subset R$ the unique prime lying above $\fp \subset A$.

\emph{Sites.} For any scheme $S$ we denote by $S_\et$ the \emph{small \'etale site} on $S$ and by $S_\fl$ the \emph{flat site} in the sense of \cite{Milne86b}: the category of schemes locally of finite type over $S$ where covering families are jointly surjective families of flat morphisms. For every $S$ there is a canonical morphism of sites $f\colon S_\fl \to S_\et$. Any commutative group scheme over $S$ defines a sheaf of abelian groups on $S_\fl$ and on $S_\et$.

\emph{Cartier dual.} If $G$ is a finite flat commutative group scheme, then $G^\rmD$ denotes the Cartier dual of $G$. 

\emph{Frobenius and Cartier operators.} For any $k$-scheme $S$ we denote by
\[
	\rmF\colon \bG_{a,S} \to \bG_{a,S},\, x \mapsto x^q
\]
the $q$-power Frobenius endomorphism of sheaves on $S_\fl$ or $S_\et$, and by
\[
	\cart\colon \Omega_S \to \Omega_S
\]
the $q$-Cartier operator of sheaves on $S_\et$. If $q=p^r$ with $p$ prime this is the $r$-th power
of the usual Cartier operator. The endomorphism $\cart$ satisfies
$\cart( f^q\omega ) = f \cart( \omega )$
for all local sections $f$ of $\cO_S$ and $\omega$ of $\Omega_S$. In particular it is $k$-linear.

\section{Overview of the proof}\label{secsketch}

Choose a generator $\lambda$ of $C[\fp](L)$. It defines a map of finite
flat group schemes
\[
	\lambda\colon (A/\fp)_Y \longto C[\fp]_Y
\]
which is an isomorphism over $Y-\fP$. It induces a map of Cartier duals
\[
	C[\fp]^\rmD_Y \longto (A/\fp)_Y^\rmD
\]
and a map on cohomology
\[
	\H^1( Y_\fl, C[\fp]^\rmD ) \longto \H^1( Y_\fl, (A/\fp)^\rmD ).
\]
This map is not $G$-equivariant (since $\lambda$ is not $G$-invariant), but rather restricts for every $n$ to a map
\begin{equation}\label{maplambda}
	\H^1( Y_\fl, C[\fp]^\rmD )(\chi^{n-1}) 
	\overset{\lambda}{\longto} \H^1( Y_\fl, (A/\fp)^\rmD )(\chi^n).
\end{equation}
We will see in section \ref{secconstantcoeff} that there is a natural $G$-equivariant 
isomorphism
\[
	\H^1( Y_\fl, (A/\fp)^\rmD ) \isomto A/\fp \otimes_k \Omega_R^{\cart=1}
\]
where $\Omega_R^{\cart=1}$ is the $k$-vector space of $q$-Cartier invariant K\"ahler differentials.
Also, we will see that the Kummer sequence induces a short exact sequence
\begin{equation}\label{kummer}
	0 \longto   A/\fp \otimes_\bZ \Gamma( Y, \cO_Y^\times )
	\overset{\dlog}{\longto} A/\fp \otimes_k \Omega_R^{\cart=1} \longto
	A/\fp \otimes_{\bF_p} (\Pic Y)[p] \longto 0.
\end{equation}
Note that the residue field of the completion $R_\fP$ is $A/\fp$, so $R_\fP$ is naturally an
$A/\fp$-algebra. In particular, for all $m$ the $R$-module
$\Omega_R/\fP^m\Omega_R$ is naturally an $A/\fp$-module. Using this the quotient map
$\Omega_R \surjto \Omega_R/\fP^m \Omega_R$ extends to an $A/\fp$-linear map
\[
	A/\fp\otimes_k\Omega_R \surjto \Omega_R/\fP^m\Omega_R.
\]

In section \S \ref{seccomparing} we will use the results on  
flat duality of Artin and Milne \cite{Artin76} to show the following.
\begin{theorem}\label{thmdiff}
For all $n$ the sequence of $A/\fp$-vector spaces
\[
	0 \longto \H^1( Y_\fl, C[\fp]^\rmD )(\chi^{n-1}) \overset{\lambda}{\longto}
		A/\fp\otimes_k\Omega_R^{\cart=1}(\chi^n) \longto \Omega_R/\fP^{q^d}\Omega_R
\]
is exact.
\end{theorem}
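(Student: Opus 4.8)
The plan is to combine the identification of $\H^1(Y_\fl,(A/\fp)^\rmD)$ with Cartier-invariant differentials, the map $\lambda$ of \eqref{maplambda}, and a local analysis at $\fP$ via flat duality. First I would set up the exact sequence of finite flat group schemes on $Y$ coming from the generator $\lambda$: since $\lambda\colon (A/\fp)_Y\to C[\fp]_Y$ is an isomorphism away from $\fP$, its cokernel (as fppf sheaves) is supported at $\fP$, and dually one gets a short exact sequence
\[
	0 \longto C[\fp]^\rmD_Y \overset{\lambda^\rmD}{\longto} (A/\fp)^\rmD_Y \longto Q \longto 0
\]
with $Q$ a skyscraper-type sheaf concentrated at $\fP$. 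Taking flat cohomology over $Y$ and passing to $\chi^n$-isotypical components (using that $G$ acts on $\lambda$ through $\chi$, which is exactly the source of the shift $\chi^{n-1}\rightsquigarrow\chi^n$) yields an exact sequence
\[
	\H^0(Y_\fl,(A/\fp)^\rmD)(\chi^n) \to \H^0(Y_\fl,Q)(\chi^n) \to \H^1(Y_\fl,C[\fp]^\rmD)(\chi^{n-1}) \overset{\lambda}{\to} \H^1(Y_\fl,(A/\fp)^\rmD)(\chi^n) \to \H^1(Y_\fl,Q)(\chi^n).
\]
So the theorem reduces to two things: (i) injectivity of $\lambda$ on the $\chi^{n-1}$-component, i.e. the connecting map out of $\H^0(Y_\fl,Q)$ vanishes on that component; and (ii) an identification of the image of $\H^1(Y_\fl,(A/\fp)^\rmD)(\chi^n)\cong A/\fp\otimes_k\Omega_R^{\cart=1}(\chi^n)$ inside $\ker\to\H^1(Y_\fl,Q)(\chi^n)$ with the kernel of the truncation map to $\Omega_R/\fP^{q^d}\Omega_R$.

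For (i), the point is that $\H^0(Y_\fl,(A/\fp)^\rmD)=\Hom(A/\fp,\bG_m)(Y)=\mu_?(R)$-type global sections; concretely $(A/\fp)^\rmD\cong \mu_{\fp}$ in the Carlitz sense, and its global sections over the (geometrically connected, with constant field $k$) curve $Y$ are small — in fact the $\chi^n$-component of $\H^0(Y_\fl,(A/\fp)^\rmD)$ already surjects onto the $\chi^n$-component of the local term $\H^0(Y_\fl,Q)$, because that surjectivity can be checked after completing at $\fP$, where the Cartier dual of $\lambda$ becomes an explicit map of $A/\fp$-algebras whose cokernel $Q_\fP$ is generated by global sections coming from $(A/\fp)^\rmD$. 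This is where Proposition on $\rho(D_\fp)=(A/\fp)^\times$ is used: the ramification at $\fP$ forces the local picture to be the totally ramified Kummer-type extension, making the local duality computation transparent.

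For (ii), I would make the isomorphism $\H^1(Y_\fl,(A/\fp)^\rmD)\isomto A/\fp\otimes_k\Omega_R^{\cart=1}$ of Section \ref{secconstantcoeff} explicit enough to trace the local term: under flat duality of Artin--Milne \cite{Artin76}, the sheaf $Q$ at $\fP$ is dual to the kernel of $\lambda$'s dual, and the induced map $\H^1(Y_\fl,(A/\fp)^\rmD)\to\H^1(Y_\fl,Q)$ translates, via $\dlog$ and the residue pairing at $\fP$, into the truncation $A/\fp\otimes_k\Omega_R\to\Omega_R/\fP^m\Omega_R$ for the appropriate $m$. The exponent $q^d$ is forced by the length of the finite flat group scheme $Q$ at $\fP$: $C[\fp]$ has rank $q^d$, and $\lambda$ fails to be an isomorphism precisely on a length-$(q^d-1)$ (or $q^d$, after bookkeeping of the unit/constant part) piece there, which under duality and the residue pairing corresponds to congruence modulo $\fP^{q^d}$. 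So the claim becomes: the composite $A/\fp\otimes_k\Omega_R^{\cart=1}(\chi^n)\hookrightarrow A/\fp\otimes_k\Omega_R\to\Omega_R/\fP^{q^d}\Omega_R$ has kernel exactly the image of $\lambda$, which is \eqref{maplambda}'s image, by the exactness of the long exact sequence above once (i) is known.

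The main obstacle I expect is (ii): pinning down the precise local duality computation at $\fP$ so that the abstract connecting map $\H^1(Y_\fl,(A/\fp)^\rmD)(\chi^n)\to\H^1(Y_\fl,Q)(\chi^n)$ really is the truncation modulo $\fP^{q^d}$, with the correct exponent and no off-by-one error. This requires (a) an explicit description of the Cartier dual $C[\fp]^\rmD$ and of $\lambda^\rmD$ near $\fP$ — e.g. via the Carlitz module's formal group or via the $\dlog$/de Rham realization — and (b) matching the Artin--Milne duality pairing with the residue pairing $\Omega_{R_\fP}\times R_\fP\to A/\fp$ compatibly with the $\cart=1$ condition (the Cartier-invariance is exactly the condition making a differential come from cohomology, and it interacts with the truncation because $\cart$ raises the $\fP$-adic valuation predictably). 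Once the local comparison is set up, the global statement follows formally from the long exact cohomology sequence and the vanishing in (i); I would isolate the local computation as the technical heart, presumably deferring it to Section \ref{seccomparing} as stated.
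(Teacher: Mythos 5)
Your overall shape (reduce to a local computation at $\fP$ via Artin--Milne duality, with the exponent $q^d$ traced to the rank of $C[\fp]$) matches the paper, but the skeleton you hang it on has a real gap. You start from a short exact sequence $0 \to C[\fp]^\rmD \to (A/\fp)^\rmD \to Q \to 0$ of fppf sheaves, but the injectivity of $\lambda^\rmD$ is not free: the map $\lambda\colon (A/\fp)_Y \to C[\fp]_Y$ has a nontrivial, non-flat kernel supported at $\fP$ (already $\lambda$ restricts to $0$ on $\Spec R/\fP$), so one cannot simply dualize an exact sequence, and the cokernel $Q$ is not a finite flat group scheme, so its flat cohomology is not accessible by the duality results you invoke. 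Moreover your step (i) is unsupported: since we are in characteristic $p$ and $(A/\fp)^\rmD \cong A/\fp \otimes_{\bF_p}\mu_p$, one has $\H^0(Y_\fl,(A/\fp)^\rmD)=0$ (as $R$ is reduced), so the ``surjectivity onto $\H^0(Y_\fl,Q)$'' you assert is really the claim $\H^0(Y_\fl,Q)(\chi^n)=0$, for which you give no argument; and the input you cite, $\rho(D_\fp)=(A/\fp)^\times$, concerns the Galois representation on the generic fibre, not the integral structure of the group scheme $C[\fp]$ at $\fP$, which is what is actually needed.

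The paper never forms $Q$. The step you are missing is an explicit local resolution of $C[\fp]$ near $\fP$: using Raynaud's classification of $A/\fp$-vector space schemes together with a discriminant computation, one shows that on an \'etale neighbourhood $V$ of the point above $\fP$ there is a short exact sequence $0 \to C[\fp]_V \to \bG_{a,V} \overset{\lambda^{q^d-1}-F^d}{\longto} \bG_{a,V}\to 0$, fitting via multiplication by $\lambda$ and $\lambda^{q^d}$ into a map of resolutions from the Artin--Schreier-type resolution of $(A/\fp)_V$. Applying Theorem \ref{thmduality} to this map of resolutions converts both $\R^1\!f_\ast$'s into kernels of Cartier-type operators on $\Omega_V$, and the snake lemma then delivers, with no further local duality bookkeeping, the four-term exact sequence
\[
0\to \R^1\!f_\ast C[\fp]^\rmD\to \R^1\!f_\ast(A/\fp)^\rmD\to \Omega_Y/\fP^{q^d}\Omega_Y\overset{1-\cart^d}{\longto}\Omega_Y/\fP\Omega_Y\to 0
\]
on $Y_\et$: injectivity, the identification of the third term, and the exponent $q^d$ all fall out of the cokernels of the vertical multiplications by $\lambda$ and $\lambda^{q^d}$ on $\Omega_V$. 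Taking global sections and keeping track of the $G$-action on $\lambda$ gives the theorem. In short, both the injectivity you wanted in (i) and the ``technical heart'' you defer in (ii) are consequences of this one local structure lemma, which your proposal does not supply.
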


The function $\lambda$ is invertible on $Y-\fP$. 
Consider the decomposition of
$1\otimes \lambda \in A/\fp \otimes_\bZ \Gamma(Y-\fP, \cO_Y^\times )$ in isotypical components:
\[
	 1\otimes \lambda = \sum_{n=1}^{q^d-1} \lambda_n 
	 \quad \text{ with } \quad
	 \lambda_n \in A/\fp \otimes_\bZ \Gamma(Y-\fP, \cO_Y^\times )(\chi^n).
\]
The homomorphism $\dlog\colon R^\times \to \Omega_R$ extends to an $A/\fp$-linear map
\[
	A/\fp \otimes_\bZ \Gamma( Y, \cO_Y^\times ) \longto \Omega_R.
\]
Inspired by Okada's construction \cite{Okada91} of a Kummer homomorphism for function fields we 
prove in section \ref{seccandidate} the following result.

\begin{theorem}\label{thmcandidate}
If $1 \leq n < q^d-1$ then $\lambda_n \in A/\fp \otimes_\bZ \Gamma( Y, \cO_Y^\times )$ 
and the following are equivalent:
\begin{enumerate}
\item $\fp$ divides $BC_n$;
\item $\dlog \lambda_n$ lies in the kernel of
	$A/\fp\otimes_k\Omega_R \to \Omega_R/\fP^{q^d}\Omega_R$.
\end{enumerate}
\end{theorem}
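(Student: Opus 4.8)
The plan is to prove the two assertions separately. I begin with the claim that $\lambda_n$ lies in $A/\fp\otimes_\bZ\Gamma(Y,\cO_Y^\times)$. Writing $f$ for the monic generator of $\fp$, the nonzero $\fp$-torsion points of $C$ are the roots of $\phi_f(z)/z\in A[z]$, a monic polynomial of degree $q^d-1$ with constant term $f$, so $\Nm_{L/K}(\lambda)=\pm f$. Since $\chi$ identifies $G$ with the inertia group at $\fP$, the prime $\fp$ is totally ramified in $L/K$ with residue degree $1$; hence $\Ord_\fP(\lambda)=\Ord_\fp(\pm f)=1$, so $\divi_Y(\lambda)=\fP$, the divisor $\fP$ is principal, and $\lambda\in\Gamma(Y-\fP,\cO_Y^\times)$. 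Because $G$ fixes $\fP$, every conjugate $[a]\lambda=\phi_{\tilde a}(\lambda)$ (for any lift $\tilde a\in A$ of $a$) also has $\Ord_\fP([a]\lambda)=1$, so under the map $A/\fp\otimes_\bZ\Gamma(Y-\fP,\cO_Y^\times)\to A/\fp$ induced by $\Ord_\fP$ the $\chi^n$-component $\lambda_n$ of $1\otimes\lambda$, which up to a nonzero scalar equals $\sum_{a\in(A/\fp)^\times}a^{-n}\otimes[a]\lambda$, is sent to $\sum_a a^{-n}=0$ in $A/\fp$ (recall $q^d-1\nmid n$). As $\fP=(\lambda)$, the sequence $0\to\Gamma(Y,\cO_Y^\times)\to\Gamma(Y-\fP,\cO_Y^\times)\overset{\Ord_\fP}{\longto}\bZ\to0$ is split exact and stays exact after $A/\fp\otimes_\bZ(-)$, so $\lambda_n\in A/\fp\otimes_\bZ\Gamma(Y,\cO_Y^\times)$.

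For the equivalence I would work in the completion $R_\fP$, a power series ring in the uniformizer $\lambda$ over its canonical coefficient field $A/\fp=\{x\in R_\fP:x^{q^d}=x\}$; then $\Omega_R/\fP^{q^d}\Omega_R\cong\bigl((A/\fp)[\lambda]/\lambda^{q^d}\bigr)\,d\lambda$, free of rank $q^d$ over $A/\fp$ with basis $\lambda^j\,d\lambda$, $0\le j<q^d$. Because this coefficient field is a subfield, the embedding $A/\fp\hookrightarrow R_\fP$ is additive, so the image of $\dlog\lambda_n$ in $\Omega_R/\fP^{q^d}\Omega_R$ is, up to a unit, $\sum_a a^{-n}\dlog([a]\lambda)$, the polar parts $(\sum_a a^{-n})\,d\lambda/\lambda$ cancelling identically. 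Differentiating $\phi_f(\lambda)=0$ and using $q^i\equiv0$ for $i\ge1$ yields the algebraic identity $\dlog\lambda=-(\partial_t\phi_f)(\lambda)\cdot(f\lambda)^{-1}\,dt$, where $(\partial_t\phi_f)(z)=\sum_i(\partial_t a_i)z^{q^i}$ for $\phi_f(z)=\sum_i a_i z^{q^i}$; this exhibits the $\fp$-integrality of the summands and shows that the $i=0$ contribution $-(\partial_t f)(f)^{-1}\,dt$, being independent of $a$, drops out of the isotypic sum.

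To make the Bernoulli--Carlitz numbers appear I would write $\lambda=e(\bar\pi/f)$ for a generator $\bar\pi$ of the period lattice, so $[a]\lambda=e(\tilde a\bar\pi/f)$. Using $e'(z)=1$ and the relation $e^{(t)}(\bar\pi)=-\partial_t\bar\pi$ obtained by differentiating $e(\bar\pi)=0$ (with $e^{(t)}(z):=\sum_i(\partial_t e_i)z^{q^i}$), together with the defining series $z/e(z)=\sum_m BC_m z^m$, one writes each $\dlog([a]\lambda)$ as the product of $\sum_m BC_m(\tilde a\bar\pi/f)^m$ with an explicit logarithmic-derivative factor involving the $\partial_t e_i$, $\partial_t\bar\pi$, $\partial_t\tilde a$ and $\partial_t f$. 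Substituting into $\sum_a a^{-n}\dlog([a]\lambda)$ and invoking the orthogonality relations $\sum_{a\in(A/\fp)^\times}a^m=-\delta_{q^d-1\mid m}$ in $A/\fp$ collapses all but a controlled family of terms, and after reducing modulo $\fp$ — where $\bar\pi/f$ is absorbed into the local parameter $\lambda$, $\tilde a$ becomes the Teichm\"uller representative of $a$, and $BC_n$ (being $\fp$-integral for $n<q^d$) obtains a class in $A/\fp$ — one arrives at an identification of the image of $\dlog\lambda_n$ in $\Omega_R/\fP^{q^d}\Omega_R$ with a fixed $A/\fp$-multiple of $BC_n\bmod\fp$ that is nonzero precisely when $\fp\nmid BC_n$. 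As $A/\fp$ is a field, this is the asserted equivalence.

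The main obstacle is this final identification. The objects $e_i$, $\bar\pi$ and $BC_n$ are defined $\infty$-adically and make no sense individually in $R_\fP$, so one must first reorganise the isotypic sum into manifestly $\fp$-integral combinations and only then reduce modulo $\fp$; afterwards one has to check that the part surviving modulo $\fP^{q^d}$ is governed by exactly $BC_n$ — not by other $BC_m$, nor by the extra terms carried by $\partial_t\tilde a$ (which is nonzero as soon as $d>1$) — and this to all orders below $q^d$, not merely the leading one. This is precisely the bookkeeping that Okada's digit-expansion computations \cite{Okada91} are built to handle, and carrying them over is where the real work lies. As a sanity check, when $d=1$ the identity $\dlog\lambda=-(\partial_t\phi_f)(\lambda)(f\lambda)^{-1}\,dt$ forces $\dlog\lambda_n=0$, consistently with the vanishing of $BC_n$ for $0<n<q-1$.
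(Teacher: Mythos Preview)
Your argument for the first assertion is correct and essentially the same as the paper's: both use the orthogonality relation $\sum_{g\in G}\chi(g)^{-n}=0$ to absorb the pole at $\fP$. The paper phrases it as rewriting $\lambda_n=-\sum_g\chi(g)^{-n}\otimes(g\lambda/\lambda)$ with $g\lambda/\lambda\in R^\times$, which is the same content as your split-exact-sequence argument.

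For the equivalence, however, your proposal is genuinely incomplete, as you acknowledge, and the difficulty you identify is real: the objects $e$, $\bar\pi$, $BC_n$ live $\infty$-adically, and your attempt to bridge to the $\fP$-adic picture by differentiating $\phi_f(\lambda)=0$ with respect to $t$ and invoking Okada-style bookkeeping leaves the crucial identification unproved. The paper avoids this entirely by a much cleaner device. Since $e_n$ is $\fp$-integral for $n<q^d$, the \emph{truncated} exponential
\[
	\bar e(z)=\sum_{n=1}^{q^d-1}e_n z^n\in(A/\fp)[z]/(z^{q^d})
\]
is a purely algebraic, $\fp$-adic object. It defines a $k$-linear automorphism of $\fm/\fm^{q^d}$ (identity on graded pieces) which, by reducing the functional equation $e(tz)=te(z)+e(z)^q$ modulo $\fp$ and $z^{q^d}$, intertwines the linear and Carlitz $A/\fp$-actions on $\fm/\fm^{q^d}$. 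Setting $\bar\pi:=\bar e^{-1}(\bar\lambda)$, one then has $g\bar\pi=\chi(g)\bar\pi$ for all $g\in G$, and any lift $\pi\in\fm$ in the $\chi$-eigenspace is a uniformizer. Now simply expand: from $\lambda\equiv\sum_{n<q^d}e_n\pi^n$ one gets $\diff\lambda\equiv\diff\pi$ (all higher terms having exponent divisible by $q$), hence
\[
	\dlog\lambda=\Bigl(\sum_{n=0}^{q^d-2}BC_n\pi^n+\delta\Bigr)\dlog\pi,\qquad\delta\in\fm^{q^d-1}.
\]
Since $\dlog\pi$ is $G$-invariant and $\pi^n\in\chi^n$-eigenspace, decomposing into isotypical components gives $\dlog\lambda_n=(BC_n\pi^n+\delta')\dlog\pi$ with $\delta'\in\fm^{n+q^d-1}$, and the equivalence is immediate.

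The point is that the paper never needs the transcendental period $\bar\pi$, the derivative $\partial_t$, or any passage between the two places: the $\fp$-integrality of the $e_n$ (and hence of the $BC_n$) for $n<q^d$ is exactly what lets one define a formal inverse-exponential uniformizer $\pi$ directly inside $R_\fP$, and the Galois eigenspace decomposition then isolates $BC_n$ on the nose with no bookkeeping at all. Your route via $\partial_t$ and Okada's computations might be made to work, but it is substantially harder than what is actually required.
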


It may (and does) happen that $\lambda_n$ vanishes for some $n$ divisible by $q-1$. However, the following theorem provides us with sufficient control over the vanishing of $\lambda_n$.

\begin{theorem}\label{thmvanishing}
If $n$ is divisible by $q-1$ but not by $q^d-1$ then the following are equivalent:
\begin{enumerate}
\item $\lambda_n=0$;
\item $A/\fp\otimes_{\bF_p}(\Pic Y)[p](\chi^n)\neq 0$.
\end{enumerate}
\end{theorem}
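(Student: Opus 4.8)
The plan is to identify $\lambda_n$ with the $\chi^n$-component of the Carlitz cyclotomic units, to compute the ambient unit eigenspace by the Dirichlet unit theorem, and then to transfer the question to $\Pic Y$ via the Kummer sequence~\eqref{kummer}. So I would first pin down the divisor of $\lambda$: the conjugates $\{\sigma\lambda\}_{\sigma\in G}$ run through the nonzero $\fp$-torsion points of $C$, whose product is (up to sign) the monic generator of $\fp$, and since $\fp$ is totally ramified in $R$, $G$ fixes $\fP$, and $G$ permutes the $\sigma\lambda$ transitively, each of them has divisor $\fP$ on $Y$. Hence $[\fP]=0$ in $\Pic Y$, so $\Pic(Y-\fP)=\Pic Y$ and the sequence $0\to\Gamma(Y,\cO_Y^\times)\to\Gamma(Y-\fP,\cO_Y^\times)\overset{v_\fP}{\to}\bZ\to0$ is exact. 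Tensoring with $A/\fp$ over $\bZ$ and passing to $\chi^n$-isotypic parts (exact, as $p\nmid\#G$), the trivial $G$-action on $\bZ$ forces $A/\fp\otimes\Gamma(Y,\cO_Y^\times)(\chi^n)\isomto A/\fp\otimes\Gamma(Y-\fP,\cO_Y^\times)(\chi^n)$ for $n\not\equiv0$, so $\lambda_n$ in fact lies in the first group; and $\cC:=\langle\sigma\lambda/\lambda:\sigma\in G\rangle$ is contained in $\Gamma(Y,\cO_Y^\times)$ (each $\sigma\lambda/\lambda$ has divisor $\fP-\fP=0$ on $Y$), with $e_n[\sigma\lambda/\lambda]=(\chi(\sigma)^n-1)\lambda_n$ for the idempotent $e_n$, so the image of $A/\fp\otimes\cC$ in $A/\fp\otimes\Gamma(Y,\cO_Y^\times)(\chi^n)$ is the line $(A/\fp)\lambda_n$.

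Next I would compute the unit eigenspace. By Dirichlet's unit theorem for $L$ with $S$ the set of places above $\infty$, the $G$-module $\Gamma(Y,\cO_Y^\times)\otimes\bQ$ is the augmentation submodule of the permutation module $\bQ[G/D_\infty]$, $D_\infty$ a decomposition group above $\infty$; since $\chi(D_\infty)=k^\times$ has order $q-1$, its $\chi^n$-eigenspace is one-dimensional exactly when $q-1\mid n$, and zero otherwise. This is precisely where the hypothesis on $n$ enters (for $q-1\nmid n$ it re-proves $\lambda_n=0$). Using also that $\cC$ has finite index in $\Gamma(Y,\cO_Y^\times)$, i.e.\ that the Carlitz cyclotomic regulator is nonzero --- a consequence of the nonvanishing at $1$ of the relevant Carlitz $L$-values --- the first paragraph reformulates the statement as: for $q-1\mid n$ with $n\not\equiv0$, one has $\lambda_n=0$ if and only if the reduction of $\cC$ fails to span the one-dimensional space $A/\fp\otimes\Gamma(Y,\cO_Y^\times)(\chi^n)$, i.e.\ if and only if $p$ divides the order of the $\chi^n$-part of the finite group $\Gamma(Y,\cO_Y^\times)/\cC$.

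It then remains to match this index condition with $A/\fp\otimes(\Pic Y)[p](\chi^n)\neq0$. For this I would apply $\dlog$ and~\eqref{kummer}: since $\dlog$ is injective on $A/\fp\otimes\Gamma(Y,\cO_Y^\times)$ with cokernel $A/\fp\otimes(\Pic Y)[p]$, in each $\chi^n$-eigenspace the cokernel of $\dlog\colon A/\fp\otimes\cC\to A/\fp\otimes\Omega_R^{\cart=1}$ has length $\length\bigl(A/\fp\otimes(\Pic Y)[p](\chi^n)\bigr)$ plus the length of the $\chi^n$-part of $A/\fp\otimes\bigl(\Gamma(Y,\cO_Y^\times)/\cC\bigr)$. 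Computing this cokernel directly --- $\dlog(\sigma\lambda/\lambda)$ is explicit in terms of $\lambda$ --- and comparing with the analytic class number formula for the Carlitz module over $R$ of \cite{Taelman10b}, \cite{Taelman11}, which yields the order of $H(C/R)$ (and, through the two exact sequences above, the $p$-part of the order of $\Pic Y$) as a regulator times a product of Bernoulli--Carlitz numbers, one should be able to separate the two contributions and conclude that the first length is positive exactly when the second is. Converting that global class number formula into this eigenspace-by-eigenspace statement is the step I expect to be the main obstacle. As a more hands-on alternative, one can compute $\lambda_n$ outright: the valuations of the torsion points $\sigma\lambda$ at the places above $\infty$ give $\lambda_n=0\iff\sum_{s\in(A/\fp)^\times}(\deg s)\,s^{-n}=0$ in $A/\fp$, after which one must still show, by a Stickelberger-type reflection argument, that this vanishing is equivalent to $(\Pic Y)[p](\chi^n)\neq0$.
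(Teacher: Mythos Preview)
Your setup in the first two paragraphs is sound and parallels the paper: the $\chi^n$-eigenspace of $A/\fp\otimes_\bZ\Gamma(Y,\cO_Y^\times)$ is one-dimensional for $q-1\mid n$, $n\not\equiv 0$, and $\lambda_n$ is its cyclotomic-unit candidate generator. But neither of your two completions works as stated, and both miss the ingredients the paper actually uses.

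Your main route invokes the class number formula for $H(C/R)$ from \cite{Taelman10b,Taelman11} together with~\eqref{kummer}. This is not the paper's approach: $H(C/R)$ is dual to $\H^1(Y_\fl,C[\fp]^\rmD)$ (that is the content of \S\ref{complement}, which the paper explicitly does \emph{not} use in proving Theorem~\ref{mainthm}), not to $(\Pic Y)[p]$, and converting a global identity for $|H(C/R)|$ into the required eigenspace-by-eigenspace statement about $\Pic Y$ is exactly the step you yourself flag as the obstacle. The Kummer sequence~\eqref{kummer} plays no role in the paper's proof of Theorem~\ref{thmvanishing}.

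Your alternative is closer in spirit but still incomplete. The asserted equivalence $\lambda_n=0\iff\sum_{s}(\deg s)\,s^{-n}=0$ in $A/\fp$ tacitly assumes that the divisor map $\Gamma(Y,\cO_Y^\times)\to D$ (valuations at the places above $\infty$) stays injective on the $\chi^n$-eigenspace after reducing mod $p$; that fails precisely when $D/U$ has a $p$-part there, which is entangled with the question at hand. And ``Stickelberger-type reflection'' is not the right tool in characteristic $p$: the needed input is the theorem of Goss and Sinnott \cite{Goss85}, which identifies the $W$-length of $W\otimes_\bZ(\Pic^0 X)(\tilde\chi^n)$ with the $p$-adic valuation of $L(1,\tilde\chi^{-n})$.

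The paper lifts to the Witt ring $W=W(A/\fp)$ from the outset and proves an \emph{equality of lengths}: with $U=W\otimes_\bZ\Gamma(Y,\cO_Y^\times)$ and $\tilde\lambda_n\in U(\tilde\chi^n)$ the Teichm\"uller lift of $\lambda_n$, one has
\[
\length_W\bigl(U(\tilde\chi^n)/W\tilde\lambda_n\bigr)=\length_W\bigl(W\otimes_\bZ\Pic Y(\tilde\chi^n)\bigr),
\]
whence the theorem since $U(\tilde\chi^n)\cong W$. Both sides are compared to $v_p\bigl(L(1,\tilde\chi^{-n})\bigr)$. For the right-hand side one passes to the compactification via
\[
0\longto D(\tilde\chi^n)/U(\tilde\chi^n)\longto W\otimes_\bZ(\Pic^0 X)(\tilde\chi^n)\longto W\otimes_\bZ(\Pic Y)(\tilde\chi^n)\longto 0
\]
and applies Goss--Sinnott. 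For the left-hand side one uses that a single infinite valuation $v$ gives an isomorphism $D(\tilde\chi^n)\isomto W$, computes $L(1,\tilde\chi^{-n})=\frac{1}{q-1}\sum_{g\in G}(\deg g)\,\tilde\chi(g)^{-n}$ directly, and invokes the Galovich--Rosen formula $v(g\lambda)=u\deg g+w$ to get $v(\tilde\lambda_n)=u(q-1)L(1,\tilde\chi^{-n})$. The $D/U$ contributions then cancel, yielding the length equality.
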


The proof is an adaptation of work of Galovich and Rosen \cite{Galovich81}, and uses $L$-functions in characteristic $0$. It is given in section \ref{secvanishing}.

\medskip

Assuming the three theorems above, we can now prove the main result.

\begin{proof}[Proof of Theorem \ref{mainthm}]
Assume $q-1$ divides $n$ and $\fp$ divides $BC_n$. We need to show that $\H^1( Y_\fl, C[\fp]^\rmD )(\chi^{1-n})$ is nonzero. Being a (component of) a differential logarithm
$\dlog \lambda_n$ is Cartier-invariant and Theorem \ref{thmcandidate} tells us that 
\[
	\dlog \lambda_n \in A/\fp \otimes_k\Omega_R^{\cart=1}(\chi^n)
\]
maps to $0$ in $\Omega_R/\fP^{q^d}\!\Omega_R$. If $\lambda_n\neq 0$ then by Theorem \ref{thmdiff} we conclude that $\H^1( Y_\fl, C[\fp]^\rmD )(\chi^{n-1})$ is nonzero and we are done.
So assume that $\lambda_n=0$. Consider the short exact sequence (\ref{kummer}).
By Theorem \ref{thmvanishing} we have that
\[
	\dim_{A/\fp} A/\fp \otimes_{\bF_p} (\Pic Y)[p](\chi^n) \geq 1,
\]
and since $ A/\fp \otimes_\bZ \Gamma( Y, \cO_Y^\times )(\chi^n) $ is one-dimensional, 
we find that
\[
	\dim_{A/\fp} A/\fp \otimes_k \Omega_R^{\cart=1} (\chi^n) \geq 2.
\]
But $\Omega_R/\fP^{q^d}\Omega_R (\chi^n) $ is one-dimensional, so it follows from
Theorem \ref{thmdiff} that
\[
	\H^1( Y_\fl, C[\fp]^\rmD)(\chi^{n-1}) \neq 0.
\]

Conversely, assume that $q-1$ divides $n$ and $\fp$ does \emph{not} divide $BC_n$. Then
Theorem \ref{thmcandidate} guarantees that $\dlog\lambda_n$ is nonzero and it follows from Theorem
\ref{thmvanishing} and the short exact sequence (\ref{kummer}) that 
\[
	\dim A/\fp \otimes_k \Omega_{R}^{\cart=1}(\chi^n) = 1.
\]
Therefore $A/\fp \otimes_k \Omega_R^{\cart=1}(\chi^n)$ is generated by $\dlog\lambda_n$ and since
the image of $\dlog \lambda_n$ in $\Omega_R/\fP^{q^d}\Omega_R$ is nonzero we conclude 
from Theorem \ref{thmdiff} that
$\H^1( Y_\fl, C[\fp]^\rmD )( \chi^{n-1} ) $ vanishes.
\end{proof}

\section{Flat duality}

In this section we summarize some of the results of Artin and Milne \cite{Artin76} on duality for flat cohomology in characteristic $p$.

Let $S$ be a scheme over $k$ and $\cV$ a quasi-coherent $\cO_S$-module. Then the pull-back 
$F^\ast \cV$ of $\cV$ under $F\colon S\to S$ is a quasi-coherent $\cO_S$-module and there is a $k$-linear (typically \emph{not} $\cO_S$-linear) isomorphism 
\[
	F\colon \cV \longto F^\ast \cV
\]
of sheaves on $S_\fl$.  

If $S$ is smooth of relative dimension $1$ over $k$ then the $q$-Cartier operator induces a canonical map
\[
	\cart \colon \cHom( F^\ast \cV, \Omega_{S/k} )
		\longto \cHom( \cV, \Omega_{S/k} )
\]
of sheaves on $S_\et$.

Recall that the we denote the canonical map $S_\fl \to S_\et$ by $f$.

\begin{theorem}[Artin \& Milne]\label{thmduality}
Let $S$ be smooth of relative dimension $1$ over $\Spec k$. Let 
\begin{equation}\label{resolution}
	0 \longto G \longto \cV \overset{\alpha-F}{\longto} F^\ast\cV \longto 0
\end{equation}
be a short exact sequence of sheaves on $S_\fl$ with
\begin{enumerate}
\item $\cV$ a locally free coherent $\cO_S$-module;
\item $\alpha\colon \cV \to F^\ast\cV$ a morphism of $\cO_S$-modules.
\end{enumerate}
Then $G$ is a finite flat group scheme and there is a short exact sequence
\begin{equation}\label{dualseq}
	0 \longto \R^1\!f_\ast G^\rmD \longto
		\cHom( F^\ast\cV, \Omega_{S/k} )
		\overset{\alpha-\cart}{\longto}
		\cHom( \cV, \Omega_{S/k} )
		\longto 0
\end{equation}
of sheaves on $S_\et$, functorial in (\ref{resolution}). Moreover, for all $i \neq 1$ one has 
$\R^i\!f_\ast G^\rmD = 0 $.	 
\end{theorem}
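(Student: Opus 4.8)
The plan is to take the short exact sequence (\ref{resolution}) as a flat resolution of $G$ by coherent sheaves, apply the functor $f_\ast$ to the Cartier dual complex, and identify the resulting cohomology sheaves using local duality between $\bG_a$-type schemes and their Cartier duals. First I would record the structural input: since $\cV$ is locally free and $\alpha - F$ is surjective on $S_\fl$ with kernel $G$, a local computation (choosing a local basis of $\cV$ identifies $\alpha - F$ with a matrix-of-additive-polynomials map $\bG_a^r \to \bG_a^r$ of the form $\alpha x - x^{(q)}$) shows that $G$ is a finite flat group scheme of rank $q^r$, $r = \rk \cV$; this is the "Moore determinant" style argument and is essentially formal. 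The sheaves $\cV$ and $F^\ast\cV$, viewed as sheaves on $S_\fl$ via their underlying additive group schemes, are the relevant $\bG_a$-modules, and $F\colon \cV \to F^\ast\cV$ is the Frobenius morphism of sheaves described just before the theorem.

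Next I would compute the Cartier dual of the two-term complex $[\cV \xrightarrow{\alpha-F} F^\ast\cV]$ in degrees $[0,1]$, whose $H^0$ vanishes and whose $H^1$ is $G$. The key local fact is that for a locally free coherent sheaf $\cE$ on a smooth relative curve $S/k$, regarded as a commutative affine group scheme (a form of $\bG_a^{\rk\cE}$), one has a canonical identification of its Cartier dual in the derived sense: $\R\cHom_{S_\fl}(\cE, \bG_m) $ is concentrated in degree $1$ and is canonically $\cHom_{\cO_S}(\cE, \Omega_{S/k})[-1]$, the point being that $\Ext^1$ of $\bG_a$ by $\bG_m$ on the flat site of a characteristic-$p$ curve is computed by differential forms (this is the local incarnation of Artin–Milne; cf. the dlog/Kummer description used in \S\ref{secconstantcoeff}). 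Dualizing the triangle $G[-1] \to \cV \xrightarrow{\alpha-F} F^\ast\cV$ therefore yields a triangle
\[
	\R f_\ast G^\rmD \longto \cHom(F^\ast\cV, \Omega_{S/k}) \xrightarrow{\ (\alpha-F)^\vee\ } \cHom(\cV, \Omega_{S/k}) \longto \R f_\ast G^\rmD[1],
\]
where the Frobenius $F\colon\cV\to F^\ast\cV$ dualizes precisely to the $q$-Cartier operator $\cart\colon\cHom(F^\ast\cV,\Omega_{S/k})\to\cHom(\cV,\Omega_{S/k})$ by the adjunction $\cart(f^q\omega)=f\cart(\omega)$, and $\alpha$ (being $\cO_S$-linear) dualizes to the plain transpose. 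Since $\cart$ is surjective on a smooth curve (Cartier operator is surjective, even split, in the smooth case), the map $\alpha-\cart$ is surjective as a map of étale sheaves, so the triangle degenerates: $\R^i\!f_\ast G^\rmD = 0$ for $i\neq 1$ and there is the short exact sequence (\ref{dualseq}), with functoriality in (\ref{resolution}) inherited from functoriality of all the constructions.

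The main obstacle is the local duality identification $\R\cHom_{S_\fl}(\bG_a, \bG_m)\simeq \Omega_{S/k}[-1]$ together with the claim that Frobenius on the source corresponds to Cartier on the target — this is the real content of Artin–Milne \cite{Artin76} and is where smoothness of relative dimension $1$ is used (so that $\Omega_{S/k}$ is an invertible sheaf, the Cartier operator is defined and surjective, and there are no higher $\cHom$'s). I would handle this by reducing to the affine local case $S = \Spec B$ with $B$ smooth over $k$, presenting an fppf cover trivializing a $\bG_m$-torsor under $\bG_a$, and matching the resulting $1$-cocycle (an additive polynomial, or rather the class of $dg/g$ for a Kummer-type section) with a differential form via $\dlog$; the compatibility with Frobenius/Cartier is then the identity $\dlog(x^q) = q\,x^{q-1}dx = 0$ versus $\cart$ lowering the twist — more precisely it is bookkeeping with the Artin–Schreier–type sequence $0\to G \to \bG_a \xrightarrow{\alpha-F}\bG_a\to 0$ and its known dual $0\to G^\rmD \to \Omega \xrightarrow{\alpha-\cart}\Omega\to 0$ on the étale site. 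Everything else — finiteness and flatness of $G$, surjectivity of $\cart$, vanishing of the higher direct images — is standard once this local statement is in hand.
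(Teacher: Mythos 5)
Your outline has the right overall shape (resolve $G$ by vector groups, dualize the two-term complex, match Frobenius with the Cartier operator), but it misses the one point that the paper actually has to prove, and one of your deductions is a non sequitur. The substantive gap: you defer the local duality $\R\cHom_{S_\fl}(\bG_a,\bG_m)\simeq \Omega_{S/k}[-1]$, and the statement that Frobenius dualizes to the Cartier operator, to \cite{Artin76} --- correctly, since that is where the content lives. But \cite[\S 2]{Artin76} treats the \emph{absolute} ($p$-power) Frobenius, whereas the theorem here concerns the $q$-power Frobenius with $q=p^r$ and the $q$-Cartier operator, which is the $r$-th power of the usual one. The entire non-cited content of the paper's proof is the reduction of the case $q=p^r$ to the prime case: given $0 \longto G \longto \cV \overset{\alpha-F^r}{\longto} (F^r)^\ast\cV \longto 0$ one forms $\cV':=\cV\oplus F^\ast\cV\oplus\cdots\oplus (F^{r-1})^\ast\cV$ with the ``shift plus $\alpha$'' map $\alpha'\colon\cV'\to F^\ast\cV'$, checks that $\ker(\alpha'-F)=G$ again, and applies the prime case to $\cV'$. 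Your proposal never confronts this mismatch; if you instead intend to rerun the Artin--Milne local computation directly for $F^r$ and $\cart^r$, that is a viable alternative route, but you must say so and carry it out, because as written the citation does not cover the statement being proved.

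The incorrect deduction: you claim $\alpha-\cart$ is surjective on $S_\et$ ``since $\cart$ is surjective on a smooth curve.'' A difference of a surjection and another map need not be surjective, so surjectivity of $\cart$ alone gives nothing here. The surjectivity of $\alpha-\cart$ as a map of \'etale sheaves is itself part of the Artin--Milne duality theorem (\'etale-locally one solves the $q^{-1}$-semilinear equation $\alpha(\eta)-\cart(\eta)=\omega$; it is dual to the exactness of the original sequence on $S_\fl$), so it must be taken from the citation rather than derived this way. The remaining ingredients --- finiteness and flatness of $G$ from the local presentation $FX=\alpha X$, vanishing of $\R^i\!f_\ast G^\rmD$ for $i\neq 1$, functoriality --- you handle essentially as the paper does.
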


\begin{proof}
Locally on $S$, we have that $G$ is given as a closed subgroup scheme of $\bG_a^n$ defined by equations of the form $FX-\alpha X=0$. In particular $G$ is flat of degree $q^{\rk \cV}$.
The Cartier dual $G^\rmD$ of $G$ is a finite flat group scheme of height $1$.

If $q$ is prime then the existence of (\ref{dualseq}) is shown in \cite[\S 2]{Artin76}. One can deduce the general case from this as follows. Assume $n$ is a positive integer, and assume given a short exact sequence 
\[
	0 \longto G \longto \cV \overset{\alpha-F^n}{\longto} (F^n)^\ast\cV \longto 0
\]
of sheaves on $S_\fl$, with $\alpha\colon \cV \to (F^n)^\ast \cV$ a $\cO_{S}$-linear map.
Define
\[
	\cV' := \cV \oplus F^\ast \cV \oplus \cdots \oplus (F^{n-1})^\ast \cV.
\]
The map $\alpha$ induces an $\cO_S$-linear map
\[
	\alpha' \colon \cV' \longto F^\ast \cV'
\]
defined by mapping the component $\cV$ to the component $(F^{n})^\ast \cV$ using $\alpha$, and mapping all other components to zero. We thus have a short exact sequence
\[
	0 \longto G \longto \cV' \overset{\alpha'-F}{\longto} F^\ast \cV' \longto 0
\]
and one deduces the theorem for $F^n$ from the theorem for $F$.
\end{proof}

\begin{example}
If $k=\bF_p$ then the Artin-Schreier exact sequence
\[
	0 \longto \bZ/p\bZ \longto \bG_a \overset{1-F}{\longto} \bG_a \longto 0
\]
on $S_\fl$ induces a dual exact sequence
\[
	0 \longto \R^1\!f_\ast\mu_p \longto
	\Omega_{S/k} \overset{1-\cart}{\longto} \Omega_{S/k} \longto 0
\]
on $S_\et$, and the exact sequence
\[
	0 \longto \alpha_p \longto \bG_a \overset{-F}{\longto} \bG_a \longto 0
\]
on $S_\fl$ induces a dual exact sequence
\[
	0 \longto \R^1\!f_\ast\alpha_p \longto
	\Omega_{S/k} \overset{-\cart}{\longto} \Omega_{S/k} \longto 0
\]
on $S_\et$.
\end{example}

\section{Flat cohomology with $(A/\fp)^\rmD$ coefficients}\label{secconstantcoeff}

The constant sheaf $A/\fp$ on $Y_\fl$ has a resolution
\[
	0 \longto A/\fp \longto A/\fp \otimes_k \bG_{a,Y} 
		\overset{1-1\otimes F}{\longto} A/\fp \otimes_k \bG_{a,Y}
			\longto 0
\]
so by Theorem \ref{thmduality} we have
$\R^i\!f_\ast (A/\fp)^\rmD = 0$ for $i\neq 1$, and $\R^1\!f_\ast (A/\fp)^\rmD$ sits in a
short exact sequence
\[
	1 \longrightarrow \R^1\!f_\ast (A/\fp)^\rmD \longrightarrow
	A/\fp\otimes_k \Omega_Y
	\overset{1\otimes\cart-1}{\longrightarrow}
	A/\fp \otimes_k \Omega_Y \longrightarrow 0
\]
of sheaves on $Y_\et$. Taking global sections now yields an isomorphism
\[
	\H^1( Y_\fl, (A/\fp)^\rmD ) \isomto  A/\fp \otimes_k \Omega_{R/k}^{\cart=1},
\]
where $\Omega_{R/k}^{\cart=1}$ denotes the $k$-vector space of Cartier-invariant K\"ahler differentials.

On the other hand, we have a natural isomorphism
\[
	(A/\fp)^\rmD \isomto  A/\fp \otimes_{\bF_p} \mu_p,
\]
of sheaves on $Y_\fl$ and the Kummer sequence
\[
	1 \longrightarrow \mu_p \longrightarrow \bG_m \overset{p}{\longrightarrow}
	\bG_m \longrightarrow 1 
\]
gives rise to a short  exact sequence
\begin{equation}\label{seq1}
	0 \longrightarrow
	A/\fp \otimes_\bZ \Gamma( Y, \cO_Y^\times ) \longrightarrow
	\H^1( Y_\fl, (A/\fp)^\rmD )
	\longrightarrow
	A/\fp \otimes_{\bF_p}(\Pic Y)[p]
	\longrightarrow 0.
\end{equation}
The proof of Theorem \ref{thmduality} shows that the resulting composed morphism
\[
	A/\fp \otimes_\bZ \Gamma( Y, \cO_Y^\times ) \longrightarrow
	\H^1( Y_\fl, (A/\fp)^\rmD ) \isomto
	A/\fp \otimes_k \Omega_R^{\cart=1}
\]
is the map induced from
\[
	\dlog\colon \Gamma( Y, \cO_Y^\times ) \to \Omega_R^{\cart=1}\colon
	u \mapsto \frac{\diff u}{u},
\]
so that (\ref{seq1}) becomes the short exact sequence (\ref{kummer}).

\section{Comparing $(A/\fp)^\rmD$ and $C[\fp]^\rmD$-coefficients}\label{seccomparing}

Choose a nonzero torsion point $\lambda \in C[\fp](L)$.
Then $\lambda$ defines a morphism
$ (A/\fp)_Y \to C[\fp]_Y $ and hence a morphism of Cartier duals
\[
	C[\fp]^\rmD_Y \overset{\lambda}{\longrightarrow} (A/\fp)^\rmD_Y.
\]
Let $\fP \in Y$ be the unique prime above $\fp\subset A$. We have $\fP = R\lambda$.

\begin{proposition}\label{propfast}
The sequence
\begin{equation}{\label{eqfast}}
	0 \longto \R^1\!f_\ast C[\fp]^\rmD 
		\overset{\lambda}{\longto} \R^1\!f_\ast (A/\fp)^\rmD 
		\longto \Omega_Y/\fP^{q^d}\!\Omega_Y
		\overset{1-\cart^d}{\longto} \Omega_Y/\fP\Omega_Y
		\longto 0,
\end{equation}
of sheaves on $Y_\et$ is exact and if $i\neq 1$ then $\R^i\!f_\ast  C[\fp]^\rmD = 0$.
\end{proposition}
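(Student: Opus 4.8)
Below is a plan for proving Proposition~\ref{propfast}.

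The plan is to realise both $C[\fp]$ and $(A/\fp)_Y$ by resolutions of the type appearing in Theorem~\ref{thmduality} and to compare them. Since $\fp=(\pi)$ for a unique monic $\pi\in A$ of degree $d$, we have $\phi_\pi=F^d+a_{d-1}F^{d-1}+\cdots+a_1F+\pi$ with $a_i\in A$, and $C[\fp]=\ker(\phi_\pi\colon\bG_a\to\bG_a)$; the substitution $x_i=F^i(x)$ for $0\le i<d$ then presents $C[\fp]$ as $\ker(M-F)$ on $\bG_a^{\oplus d}$, where $M$ is the companion matrix of $\phi_\pi$. This is a resolution of the form (\ref{resolution}) with $\cV$ free of rank $d$, so Theorem~\ref{thmduality}, applied over the smooth curve $Y$, at once gives $\R^if_\ast C[\fp]^\rmD=0$ for $i\neq1$ together with the corresponding exact sequence (\ref{dualseq}) expressing $\R^1f_\ast C[\fp]^\rmD$ inside $\cHom(F^\ast\cV,\Omega_Y)$, functorially in the resolution. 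For later use one also records that $a_1,\dots,a_{d-1}$ all lie in $\fp$: every nonzero point of $C[\fp]$ is conjugate to $\lambda$ and hence has $\fP$-valuation $1$, so the Newton polygon of $\phi_\pi$ over $R_\fP$ is a single segment, which forces $\phi_\pi\equiv F^d\pmod\fp$.

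For the comparison, pick a $k$-basis $v_1,\dots,v_d$ of $A/\fp$; then $\lambda$ sends $v_j$ to the conjugate torsion point $\phi_{v_j}(\lambda)$, and a direct rewriting of the relations $\phi_\pi(\phi_{v_j}(\lambda))=0$ shows that $\lambda$ lifts to a morphism of resolutions given by the Moore-type matrix $P=\bigl(\phi_{v_j}(\lambda)^{q^{i-1}}\bigr)_{1\le i,j\le d}$. Functoriality of Theorem~\ref{thmduality} then produces a commutative ladder between the two exact sequences (\ref{dualseq}) which induces on first terms precisely the map $\lambda$ of the Proposition, and whose two remaining vertical maps are the $\cO_Y$-linear transposes of $F^\ast P$ and of $P$; both are isomorphisms over $Y-\fP$, hence --- their sources being locally free --- injective. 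The snake lemma then shows that $\lambda$ is injective on $\H^1$ and identifies its cokernel with the kernel of the map induced between the cokernels of those two vertical maps, which are supported at $\fP$. A determinant computation over $R_\fP$ --- using that $\lambda$ is a uniformiser at $\fP$ and $\fp R_\fP=\fP^{q^d-1}$, so that $v_\fP(\det M)=v_\fP(\pi)=q^d-1$ while $v_\fP(\det P)=1+q+\cdots+q^{d-1}$ by the Moore determinant formula --- shows this cokernel has length $q^d-1$, in agreement with $\dim_{A/\fp}\ker\bigl(1-\cart^d\colon\Omega_Y/\fP^{q^d}\Omega_Y\to\Omega_Y/\fP\Omega_Y\bigr)$.

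The step I expect to be the main obstacle is upgrading this length count to a canonical identification of the cokernel of $\lambda$ with $\ker(1-\cart^d)$, compatible with the natural map $\R^1f_\ast(A/\fp)^\rmD\hookrightarrow A/\fp\otimes_k\Omega_Y\surjto\Omega_Y/\fP^{q^d}\Omega_Y$, so as to obtain the four-term sequence asserted. After passing to $R_\fP$ this amounts to a Smith-normal-form analysis of $P$ and $M$ together with a bookkeeping of the Cartier operator through the (non-canonical) identifications $\coker(g^\vee)\cong\coker(g)$ supplied by local duality on the curve; the truncation modulo $\fP^{q^d}$ is the shadow of $\deg\phi_\pi=q^d$, and the occurrence of $\cart^d$ rather than $\cart$ ultimately stems from the congruence $\phi_\pi\equiv F^d\pmod\fp$, i.e.\ from $M$ reducing modulo $\fp$ to the nilpotent shift matrix. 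Finally, the exactness at the last term --- surjectivity of $1-\cart^d$ --- is immediate from the explicit form of the $q^d$-Cartier operator on the truncated differentials of a discrete valuation ring, since the identity summand already induces the surjection $\Omega_Y/\fP^{q^d}\Omega_Y\twoheadrightarrow\Omega_Y/\fP\Omega_Y$.
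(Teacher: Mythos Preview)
Your overall strategy---build resolutions of the form (\ref{resolution}) for both $C[\fp]$ and $(A/\fp)$, apply Theorem~\ref{thmduality}, and compare via the snake lemma---matches the paper's. But the execution diverges at a crucial point, and the difference is exactly what creates the gap you flag.

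You work globally with the rank-$d$ resolution coming from the companion matrix of $\phi_\pi$, and then try to identify the cokernel of the Moore matrix $P^\vee$ on $\Omega_Y^{\oplus d}$ with $\Omega_Y/\fP^{q^d}\Omega_Y$ in a way compatible with the Cartier operator and with the stated map out of $\R^1 f_\ast(A/\fp)^\rmD$. As you yourself note, this requires a Smith-normal-form analysis together with delicate bookkeeping of $\cart$ through non-canonical local-duality identifications. Your length computation is fine, but it does not by itself produce the four-term sequence; the identification step is not carried out, and it is not clear that the naive Smith-form argument will respect the $A/\fp$-module structure and the Cartier operator simultaneously.

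The paper sidesteps this entirely by passing to a rank-\emph{one} resolution locally near $\fP$. Using Raynaud's classification of $A/\fp$-vector-space schemes over the strict henselisation at $\fP$ (together with the observation that the special fibre of $C[\fp]$ is $\ker F^d$), one finds an \'etale neighbourhood $V$ of $\fP$ over which $C[\fp]_V$ sits in
\[
  0\longto C[\fp]_V\longto\bG_{a,V}\xrightarrow{\ \lambda^{q^d-1}-F^d\ }\bG_{a,V}\longto 0.
\]
Now the comparison with $0\to(A/\fp)_V\to\bG_{a,V}\xrightarrow{1-F^d}\bG_{a,V}\to 0$ is given by scalar maps (multiplication by $\lambda$ and by $\lambda^{q^d}$), so after dualising via Theorem~\ref{thmduality} the cokernels are literally $\Omega_V/\lambda^{q^d}\Omega_V$ and $\Omega_V/\lambda\Omega_V$, and the induced map between them is visibly $1-\cart^d$. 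Over $U=Y-\fP$ the map $\lambda$ is an isomorphism, so nothing remains there. No Smith normal form, no length-to-identification upgrade.

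In short: your global rank-$d$ approach establishes the vanishing of $\R^i f_\ast C[\fp]^\rmD$ for $i\neq1$ and the injectivity of $\lambda$, and gets the correct length of the cokernel, but the canonical identification of that cokernel with $\ker(1-\cart^d)$ is genuinely missing. The paper's local rank-one presentation (via Raynaud) is the idea that closes this gap.
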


Note that for all $N$ the sheaf $\Omega_Y / \fP^N \Omega_Y$ on $Y_\et$ is naturally a sheaf of $A/\fp$-modules. The middle map in the proposition is the composition
\[
	\R^1\!f_\ast (A/\fp)^\rmD \longto A/\fp \otimes_k \Omega_Y \surjto
	\Omega_Y/\fP^{q^d}\!\Omega_Y.
\]
Taking global sections in (\ref{eqfast}) we obtain an exact sequence of $A/\fp$-vector spaces
\[
	0 \longrightarrow \H^1( Y_\fl, C[\fp]_Y^\rmD ) \overset{\lambda}{\longrightarrow}
	A/\fp\otimes_k \Omega_R^{\cart=1} \longrightarrow \Omega_R/\fP^{q^d}\!\Omega_R
\]
and considering the $G$-action on $\lambda$ we see that Proposition \ref{propfast} implies
 Theorem \ref{thmdiff}.

As one may expect, the proof of Proposition \ref{propfast} relies on a careful analysis of the group scheme $C[\fp]_Y$ near the prime $\fP$. 

Let $\bar{s}\to Y$ be a geometric point lying above $\fP\in Y$,  

\begin{lemma}
There is an \'etale neighborhood $V\to Y$ of $\bar{s}$ and a short exact sequence
\[
	0 \longto C[\fp]_V \longto \bG_{a,V}
	\overset{ \lambda^{q^d-1}-F^d }{ \longto } \bG_{a,V} \longto 0
\]
of sheaves of $A/\fp$-vector spaces on $V_\fl$.
\end{lemma}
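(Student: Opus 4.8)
The plan is to produce the claimed resolution by first constructing it \emph{generically} on all of $Y$ using the defining equation of the Carlitz module, and then correcting the Frobenius exponent near $\fP$ so that the group scheme $C[\fp]$ appears as a height-one kernel. Recall that on $\bG_{a}$ the operator $\phi_\fp$ (the image under $\phi$ of a monic generator of $\fp$) is an additive polynomial $\phi_\fp = \sum_{i=0}^{d} a_i F^i$ with $a_0$ equal to that generator and $a_d=1$, so that $C[\fp]_Y = \ker(\phi_\fp\colon \bG_{a,Y}\to\bG_{a,Y})$ and there is a short exact sequence $0\to C[\fp]_Y\to\bG_{a,Y}\xrightarrow{\phi_\fp}\bG_{a,Y}\to 0$ on $Y_\fl$. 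This already exhibits $C[\fp]$ as a finite flat group scheme of degree $q^d$, but the outer polynomial is not yet of the shape $\lambda^{q^d-1}-F^d$ required by the lemma and, more importantly, does not visibly carry the $A/\fp$-linear structure.

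Next I would pass to an \'etale neighborhood $V$ of $\bar s$ on which $\lambda$ becomes a section of $\cO_V^\times$: since $\fP = R\lambda$ and $\lambda$ generates the maximal ideal locally, $\lambda$ is a unit precisely on $Y-\fP$, so strictly I must instead work on a neighborhood where I can \emph{compare} $\phi_\fp$ with $\lambda^{q^d-1}-F^d$ as maps $\bG_{a,V}\to\bG_{a,V}$ up to an automorphism of the source. The key computation is that over $V$ the additive polynomials $\phi_\fp$ and $\mu:=\lambda^{q^d-1}-F^d = -\prod_{c\in\bF_q}(F-c\lambda^{q-1}\cdot\text{(unit)})$-type factorizations (using that the nonzero $\fp$-torsion points are the $\bF_q^\times$-multiples, more precisely the $(A/\fp)^\times$-multiples, of $\lambda$ under $\phi$) have the same kernel, namely $C[\fp]_V$. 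Concretely, $C[\fp](L) = \{\phi_a(\lambda) : a\in A/\fp\}$ and for each such point $\mu$ kills it because $\mu$ as an $\bF_q$-linear polynomial vanishes on the $\bF_q$-span of $\lambda$; refining this to the full $A/\fp$-module structure is what forces the exponent $q^d-1 = \#(A/\fp)^\times$ and the twist $F^d$. So on $V$ one gets a factorization $\phi_\fp = \mu\circ h$ (or $h\circ\mu$) with $h$ an $\cO_V$-linear additive polynomial that is an isomorphism $\bG_{a,V}\isomto\bG_{a,V}$ because $\phi_\fp$ and $\mu$ are both monic separable of the same degree with the same kernel. Replacing the middle $\bG_{a,V}$ via $h$ turns the resolution $0\to C[\fp]_V\to\bG_{a,V}\xrightarrow{\phi_\fp}\bG_{a,V}\to 0$ into $0\to C[\fp]_V\to\bG_{a,V}\xrightarrow{\lambda^{q^d-1}-F^d}\bG_{a,V}\to 0$.

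Finally I must upgrade this to an exact sequence \emph{of sheaves of $A/\fp$-vector spaces}. The $A/\fp$-action on $C[\fp]_V$ is the one coming from $\phi$; on the two copies of $\bG_{a,V}$ I define the action of $a\in A/\fp$ to be multiplication by $\tilde a$, where $\tilde a$ is the image of $a$ under the section $A/\fp = \cO_V/\fP\cO_V$-structure discussed before the lemma (recall the residue field at $\fP$ is $A/\fp$, and $V$ being an \'etale neighborhood of $\bar s$ one has a canonical Teichm\"uller-type lift once one is local; globally on $V$ one uses the $\bF_{q^d}$-algebra structure on $\cO_V$ near $\bar s$). The point is that $\lambda^{q^d-1}$ acts on $\lambda$-multiples compatibly with this scaling because $\lambda^{q^d-1}$ is a unit congruent to a constant in $\bF_{q^d}^\times$, and $F^d$ is $\bF_{q^d}$-linear; hence $\lambda^{q^d-1}-F^d$ is $A/\fp$-linear for this action, and the inclusion $C[\fp]_V\hookrightarrow\bG_{a,V}$ intertwines $\phi_a$ with multiplication by $\tilde a$.

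The main obstacle I anticipate is the middle step: producing the factorization $\phi_\fp = \mu\circ h$ with $h$ an \emph{isomorphism} of additive group schemes and checking it is $A/\fp$-linear — equivalently, verifying that the exponent $q^d-1$ and the twist by $F^d$ are exactly right so that $\mu$ has kernel precisely $C[\fp]$ as an $A/\fp$-module scheme, not merely as an $\bF_q$-vector space scheme. This requires working $\fP$-adically with the Carlitz module (using that $C$ has good reduction and that $\lambda$ generates $\fP$) and is where the \'etale localization at $\bar s$ is genuinely used, to have $\lambda$ available as a coordinate and to split off the residue field $A/\fp$ inside $\cO_V$.
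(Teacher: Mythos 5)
The central step of your argument --- that $\phi_\fp$ and $\mu := \lambda^{q^d-1} - F^d$ have the same kernel inside $\bG_{a,V}$, so that $\phi_\fp = \mu\circ h$ for an automorphism $h$ of $\bG_{a,V}$ --- is false, and it is exactly where the real content of the lemma lies. Over the generic fibre one has $\phi_\fp(X) = \prod_{a\in A/\fp}\bigl(X - \phi_a(\lambda)\bigr)$, whereas $\lambda^{q^d-1}X - X^{q^d} = -\prod_{c\in\bF_{q^d}}(X - c\lambda)$: the first root set is the Carlitz $\fp$-torsion $\{\phi_a(\lambda)\}$, the second is the \emph{ordinary} $\bF_{q^d}$-span of $\lambda$. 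These are different subschemes of $\bG_a$; already $\phi_t(\lambda)/\lambda = t + \lambda^{q-1}$ is not a constant, so $\phi_t(\lambda)$ is not killed by $\mu$. Your ``key computation'' conflates the $A/\fp$-span of $\lambda$ under the Carlitz action with the span under ring multiplication. The failure is also visible formally: an $\cO_V$-linear additive polynomial $h$ is necessarily a scaling $X\mapsto cX$ (any Frobenius term would break $\cO_V$-linearity), and $\mu(cX) = c\lambda^{q^d-1}X - c^{q^d}X^{q^d}$ can never equal $\phi_\fp$, which has nonzero intermediate coefficients $b_iF^i$ for $0<i<d$. Equivalently, two separable additive polynomials of the same degree with the same kernel differ by a scalar, and these two visibly do not.

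What the lemma actually asserts is that, \'etale-locally at $\fP$, the $A/\fp$-vector space scheme $C[\fp]$ is \emph{abstractly} isomorphic to $\ker(\lambda^{q^d-1}-F^d)$ --- the map $C[\fp]_V\to\bG_{a,V}$ in the statement is a new embedding, not the tautological one cut out by $\phi_\fp$. The paper obtains this from Raynaud's classification of finite flat $\bF_{q^d}$-vector space schemes over the strict henselization $\cO_{Y,\bar{s}}$: such a scheme sits in $\bG_a^{rd}$ cut out by equations $X_i^p = a_iX_{i+1}$; because the special fibre of $C[\fp]$ is $\ker F^d$, all but one $a_i$ is a unit, so eliminating variables yields a presentation $0\to C[\fp]_S\to\bG_{a,S}\overset{a-F^d}{\longto}\bG_{a,S}\to 0$; and a discriminant comparison ($a^{q^d}$ versus $b_0^{q^d}$ modulo squares of units, with $b_0$ a generator of $\fp$ and $\fp\cO_{Y,\bar{s}} = (\lambda^{q^d-1})$) pins down $a=\lambda^{q^d-1}$ up to a unit. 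Your proposal contains no substitute for this classification input, and the route you propose in order to avoid it cannot be repaired, since it rests on a false identification of subschemes of $\bG_a$.
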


\begin{proof}
Let $\cO_{Y,\bar{s}}$ be the \'etale stalk of $\cO_{Y}$ at $\bar{s}$ (a strict henselization of $\cO_{Y,\fP}$) and let $S=\Spec \cO_{Y,\bar{s}}$.
We have that $C[\fp]_S$ is a finite flat $A/\fp$-vector space scheme of rank $q^d$ over $S$, \'etale over the generic fibre. Such vector space schemes have been classified by Raynaud \cite[\S 1.5]{Raynaud74} (generalizing the results of Oort and Tate
\cite{Tate70}). Let $q=p^r$ with $p=\car k$, then the classification says that
$C[\fp]_S$ is a subgroupscheme of $\bG_a^{rd}$ given by equations
\[
	X_i^p = a_i X_{i+1}
\]
for some $a_i \in \cO_{Y,\bar{s}}$, and where the index $i$ runs over
$\bZ/rd\bZ$. Since the special fibre
of $C[\fp]_S$ is the kernel of $F^d$ on $\bG_a$, we find that all but one $a_i$ are units. In particular, we can eliminate all but one variable and find that 
$C[\fp]_S$ sits in a short exact sequence
\[
	0 \longto C[\fp]_S \longto \bG_{a,S} \overset{a - F^d}{\longto} \bG_{a,S} 
		\longto 0
\]
for some $a \in \cO_{Y,\bar{s}}$, well-defined up to a unit. We claim that
$a=\lambda^{q^d-1}$ (up to a unit). To see this, we compute the discriminant of the finite flat $S$-scheme $C[\fp]_S$ in two ways. On the one hand $C[\fp]_S$ is defined by the equation $X^{q^d}-aX$, with discriminant $a^{q^d}$ (modulo squares of units). On the other hand, $C[\fp]$ is the $\fp$-torsion scheme of the Carlitz module and hence it is given by an equation 
\[
	X^{q^d} + b_{d-1} X^{q^{d-1}} + \ldots + b_0 X
\]
with $b_i \in A$, and with $b_0$ a generator of $\fp$. In this way we find that the discriminant equals $b_0^{q^d}$ (modulo squares of units). Comparing the two expressions we conclude
that we can take $a=\lambda^{q^d-1}$, which proves the claim.

To finish the proof it suffices to observe that this short exact sequence is already defined over some \'etale neighbourhood $V\to Y$ of $\bar{s}$.
\end{proof}

Using this lemma we can now prove Proposition \ref{propfast}.

\begin{proof}[Proof of Proposition \ref{propfast}]
Let $V$ be as in the lemma and $U := Y - \fP$. Then $\{U,V\}$ is an \'etale cover of
$Y$ and it suffices to prove that the pull-backs of (\ref{eqfast}) to $U_\et$ and $V_\et$ are exact.

The pull-back to $U_\et$ is the sequence
\[
	0 \longto \R^1\!f_\ast C[\fp]_U^\rmD 
		\overset{\lambda}{\longto} \R^1\!f_\ast (A/\fp)_U^\rmD
	\longto 0
\]
which is exact because $\lambda\colon (A/\fp)_U \to C[\fp]_U$ is an isomorphism of sheaves on $U_\fl$.

For the exactness over $V_\et$, consider the commutative square
\[
\begin{CD}
\bG_{a,V} @>{1-F^d}>> \bG_{a,V}  \\
 @VV{\lambda}V @VV{\lambda^{q^d}}V @. \\
\bG_{a,V} @>{\lambda^{q^d-1}-F^d}>> \bG_{a,V} 
\end{CD}
\]
It extends to a map of short exact sequences
\[
\begin{CD}
0 @>>> (A/\fp)_V @>>> \bG_{a,V} @>{1-F^d}>> \bG_{a,V} @>>> 0 \\
@. @VVV @VV{\lambda}V @VV{\lambda^{q^d}}V @. \\
0 @>>> C[\fp]_V @>>> \bG_{a,V} @>{\lambda^{q^d-1}-F^d}>> \bG_{a,V} @>>> 0
\end{CD}
\]
and without loss of generality we may assume that the leftmost vertical map is the one
induced by $\lambda$. Now Theorem \ref{thmduality} (with $k$, $F$, and $S$ replaced by $A/\fp$, $F^d$ and $V$) yields a commutative diagram of sheaves of $A/\fp$-vector spaces on $V_\et$ with exact rows:
\[
\begin{CD}
0 @>>> \R^1\!f_\ast C[\fp]_V^\rmD @>>>
	\Omega_V @>{\lambda^{q^d-1}-\cart^d}>> \Omega_V @>>> 0 \\
@. @V{\lambda}VV @VV{\lambda^{q^d}}V @VV{\lambda}V @. \\
0 @>>> \R^1\!f_\ast (A/\fp)_V^\rmD
	@>>>  \Omega_V @>{1-\cart^d}>>\Omega_V @>>> 0
\end{CD}
\]
(where by abuse of notation, we denote the canonical maps of sites
 $V_\fl \to V_\et$ and $Y_\fl\to Y_\et$ by the same symbol $f$.)
This shows that on $V_\et$ we have an exact sequence
\[
	0 \longto \R^1\!f_\ast C[\fp]_V^\rmD 
		\overset{\lambda}{\longto} \R^1\!f_\ast (A/\fp)_V^\rmD 
		\longto \Omega_V/\lambda^{q^d}\Omega_V
		\overset{1-\cart^d}{\longto} \Omega_V/\lambda\Omega_V
		\longto 0,
\]
so the pullback of (\ref{eqfast}) to $V_\et$ is exact. 
\end{proof}

\section{A candidate cohomology class}\label{seccandidate}

Let $\lambda \in R$ be a primitive $\fp$-torsion point of the Carlitz module.
Consider the decomposition
\[
	1\otimes \lambda = \sum_{n=1}^{q^d-1} \lambda_n
\]
in $A/\fp \otimes_\bZ \Gamma(Y-\fP, \cO_Y^\times )$. 	
In this section we will prove Theorem \ref{thmcandidate}, which states that for $1\leq n < q^d-1$
we have 
\[
	\lambda_n \in A/\fp \otimes\Gamma( Y, \cO_Y^\times )
\]
and that the following are equivalent
\begin{enumerate}
\item $\fp$ divides $BC_n$;
\item $\dlog \lambda_n$ lies in the kernel of
	$A/\fp \otimes_k \Omega_R \surjto \Omega_R/\fP^{q^d}\Omega_R$.
\end{enumerate}

We start with the first assertion.

\begin{proposition}
If $1\leq n<q^d-1$ then $\lambda_n \in A/\fp \otimes_\bZ \Gamma( Y, \cO^\times )$.
\end{proposition}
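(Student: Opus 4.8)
The plan is to show that the isotypical component $\lambda_n$, a priori only a unit on $Y-\fP$, is actually a unit on all of $Y$, equivalently that its divisor on $Y$ is supported at $\fP$ and has degree zero, hence is trivial once we check the $\fP$-valuation vanishes. Since $\Gamma(Y-\fP,\cO_Y^\times)$ and $\Gamma(Y,\cO_Y^\times)$ differ only by functions with zeros and poles along $\fP$, and $A/\fp\otimes_\bZ(-)$ is exact on the relevant pieces, it suffices to compute the order of vanishing of $\lambda_n$ at $\fP$ and show it is $\equiv 0$; but because $\lambda$ itself generates $\fP$, i.e. $\Ord_\fP(\lambda)=1$, one has $\Ord_\fP(1\otimes\lambda)=1$ in the appropriate sense, and the point is to see how this single unit of valuation distributes among the eigencomponents.

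Concretely, I would argue via the $G$-action on the divisor. The prime $\fP$ is totally ramified over $\fp$ (this is part (3) of the cited Proposition: $\rho(D_\fp)=\rho(I_\fp)=(A/\fp)^\times$), so $G$ fixes $\fP$ and acts on the completion $R_\fP$ with residue field $A/\fp$; a uniformizer at $\fP$ can be taken to be $\lambda$ itself, and for $\sigma\in G$ one has $\sigma(\lambda)=\phi_a(\lambda)$ where $a\in(A/\fp)^\times$ corresponds to $\sigma$ under $\chi$, so $\sigma(\lambda)\equiv \chi(\sigma)\lambda \pmod{\fP^2}$. Hence on the group of divisors supported at $\fP$ — which is just $\bZ\cdot[\fP]$ with trivial $G$-action — the divisor of each $\lambda_n$ is a $G$-invariant integer multiple of $[\fP]$, while $\lambda_n$ lies in the $\chi^n$-eigenspace. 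For $n\not\equiv 0$, a function in the $\chi^n$-eigenspace whose divisor is $G$-invariant and supported at $\fP$ must have divisor zero there, because the map sending a function (mod constants) to its $\fP$-order is a $G$-equivariant map to the trivial representation, and thus kills the nontrivial eigenspaces. Since $1\le n<q^d-1$ excludes exactly $n\equiv 0 \pmod{q^d-1}$ (the trivial character), every such $\lambda_n$ has trivial divisor at $\fP$, hence is a global unit on $Y$.

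To make the "mod constants" step precise I would work with $A/\fp\otimes_\bZ\Gamma(Y-\fP,\cO_Y^\times)$ and the exact sequence relating it to $A/\fp\otimes_\bZ\Gamma(Y,\cO_Y^\times)$ via the $\fP$-valuation $v_\fP\colon \Gamma(Y-\fP,\cO_Y^\times)\to\bZ$; tensoring with $A/\fp$ over $\bZ$ keeps the sequence $0\to \Gamma(Y,\cO_Y^\times)\to\Gamma(Y-\fP,\cO_Y^\times)\xrightarrow{v_\fP}\bZ$ exact on the left (no new kernel is introduced, since $\Gamma(Y,\cO_Y^\times)$ is a direct summand up to finite index issues that vanish mod $p$ — one checks the relevant $\bZ$-module is torsion-free or handles the torsion directly), so $A/\fp\otimes_\bZ\Gamma(Y,\cO_Y^\times)=\ker\big(A/\fp\otimes v_\fP\big)$. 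The target $A/\fp\otimes_\bZ\bZ=A/\fp$ carries the trivial $G$-action, so its $\chi^n$-component vanishes for $n\not\equiv 0\pmod{q^d-1}$; projecting $1\otimes\lambda$ to the $\chi^n$-eigenspace and then applying $A/\fp\otimes v_\fP$ therefore gives $0$, i.e. $\lambda_n\in A/\fp\otimes_\bZ\Gamma(Y,\cO_Y^\times)$.

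The main obstacle is the exactness of $0\to A/\fp\otimes_\bZ\Gamma(Y,\cO_Y^\times)\to A/\fp\otimes_\bZ\Gamma(Y-\fP,\cO_Y^\times)$: tensoring a left-exact sequence with $A/\fp$ need not preserve left-exactness, so one must rule out that some nonunit becomes divisible. This is handled by noting that the cokernel of $\Gamma(Y,\cO_Y^\times)\to\Gamma(Y-\fP,\cO_Y^\times)$ injects into $\bZ$ (via $v_\fP$), which is torsion-free, so $\mathrm{Tor}_1^\bZ(A/\fp,\text{coker})=0$ and the tensored sequence stays exact on the left. With that in hand the eigenspace argument is immediate, and the only real input is the ramification statement that $\fP/\fp$ is totally ramified with $G$ acting on a uniformizer through $\chi$, which is exactly part (3) of the cited proposition on the Galois action on Carlitz torsion.
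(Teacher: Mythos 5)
Your argument is correct, and it reaches the conclusion by a somewhat different route than the paper, though both rest on the same two inputs: that $G$ fixes $\fP$, so the valuation $v_\fP$ is a $G$-map onto the trivial module $\bZ$, and that $\chi^n$ is nontrivial for $1\leq n<q^d-1$. You package these structurally: you identify $A/\fp\otimes_\bZ\Gamma(Y,\cO_Y^\times)$ with the kernel of $A/\fp\otimes v_\fP$ (using that the cokernel of $\Gamma(Y,\cO_Y^\times)\hookrightarrow\Gamma(Y-\fP,\cO_Y^\times)$ embeds in $\bZ$, hence is free and contributes no $\mathrm{Tor}_1$), and then note that the $\chi^n$-eigenspace of the trivial $G$-module $A/\fp$ vanishes. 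The paper instead writes out the idempotent explicitly, $\lambda_n=-\sum_{g}\chi(g)^{-n}\otimes g\lambda$, uses $\sum_g\chi(g)^{-n}=0$ to replace each $g\lambda$ by the global unit $g\lambda/\lambda$, and thereby exhibits $\lambda_n$ directly as an element of $A/\fp\otimes_\bZ\Gamma(Y,\cO_Y^\times)$ --- which sidesteps the left-exactness question that you (correctly) dispose of via the $\mathrm{Tor}$ computation; your version makes that implicit point explicit, which is a small gain in rigor. Two cosmetic remarks: the talk in your first paragraph of the divisor having ``degree zero'' is a red herring on the affine scheme $Y$ and plays no role in your actual argument, and the computation of $\sigma(\lambda)$ modulo $\fP^2$ is more than you need, since only the $G$-invariance of $v_\fP$ (i.e.\ that $G$ fixes $\fP$) enters.
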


\begin{proof}
For all integers $n$ we have 
\[
	\lambda_n = -\sum_{g\in G} \chi(g)^{-n} \otimes g\lambda.
\]
If moreover $n$ is not divisible by $q^d-1$ then $\sum_{g\in G} \chi(g)^{-n} = 0$ so that we can rewrite the above identity as
\[
	\lambda_n = -\sum_{g\in G} \chi(g)^{-n} \otimes \frac{g\lambda}{\lambda}.
\]
Since the point $\fP$ is fixed under $G$ it follows that for all $g\in G$ one has that $g\lambda/\lambda$ has valuation $0$ at $\fP$  and therefore for all $1\leq n < q^d-1$ we have
\[
	\lambda_n \in A/\fp \otimes\Gamma( Y, \cO_Y^\times ),
\]
as was claimed.
\end{proof}

Now let $L_\fP$ be the completion of $L$ at $\fP$ and $\fm$ the maximal ideal of its valuation ring $\cO_{Y,\fP}^\wedge$. Note that $\fm=(\lambda)$.

Consider the quotient $\fm/\fm^{q^d}$. It carries two $A$-module structures:
\begin{enumerate}
\item the \emph{linear} action coming from the $A$-algebra structure of $\cO_{Y,\fP}^\wedge$;
\item the \emph{Carlitz} action defined using $\varphi$.
\end{enumerate}
Also, the Galois group $G$ acts on $\fm/\fm^{q^d}$ and the action commutes with both $A$-module structures.

\begin{lemma}
Both actions on $\fm/\fm^{q^d}$ factor over $A/\fp$.
\end{lemma}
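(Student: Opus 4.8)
The plan is to show separately that $\fp$ annihilates $\fm/\fm^{q^d}$ for each of the two actions, treating the linear action first since it is elementary and then reducing the Carlitz action to it.

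\medskip

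\emph{The linear action.}
First I would recall that the residue field of $\cO_{Y,\fP}^\wedge$ is $A/\fp$, since $\fP$ is the unique prime above $\fp$ and $L/K$ is totally ramified there (the inertia group $\rho(I_\fp) = (A/\fp)^\times$ has order equal to $[L:K]$, so the residue extension is trivial). Therefore $\cO_{Y,\fP}^\wedge$ is a complete discrete valuation ring which is an $A/\fp$-algebra: the prime $\fp$ acts as $0$ on the residue field, hence $\fp \cdot \cO_{Y,\fP}^\wedge \subseteq \fm$, and so $\fp \cdot \fm^j \subseteq \fm^{j+1}$ for all $j$. Iterating, $\fp^{q^d} \cdot \fm \subseteq \fm^{q^d+1}$, but in fact already a single generator of $\fp$ lies in $\fm$, so $\fp \cdot \fm \subseteq \fm^2$ and hence $\fp$ kills every graded piece $\fm^j/\fm^{j+1}$; since $\fm/\fm^{q^d}$ is a successive extension of such pieces, $\fp$ annihilates it for the linear action. (Concretely: $\fp$ acts on the length-$(q^d-1)$ filtered module $\fm/\fm^{q^d}$ nilpotently with $\fp^{q^d-1} = 0$ at worst, but the one-step shift already gives what we need once we observe $\fm = (\lambda)$ and a generator $\pi$ of $\fp$ satisfies $\Ord_\fP(\pi) = q^d - 1 \geq 1$.)

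\medskip

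\emph{The Carlitz action.}
For the Carlitz action I would use that the Carlitz action of $t \in A$ is $\phi(t) = t + F$ where $F$ is $q$-power Frobenius, so on a local section $x$ of $\cO_{Y,\fP}^\wedge$ one has $\phi(t)(x) = tx + x^q$, and more generally for $a \in A$ the operator $\phi(a)$ is $\bF_q$-linear of the shape $a x + (\text{higher Frobenius terms})$. On $\fm/\fm^{q^d}$ the term $x^q$ and all higher Frobenius powers land in $\fm^q \subseteq \fm^{q^d}$ provided... no, $q \leq q^d$ only gives $\fm^q$, which is not zero. So instead I would argue that modulo $\fm^2$ the Carlitz action of $a$ agrees with multiplication by the image of $a$ in the residue field $A/\fp$, because the Frobenius correction terms are divisible by $\fm^q \subseteq \fm^2$; hence $\phi$ too factors through $A/\fp$ on $\fm/\fm^2$, and then the same filtration argument as above — each graded piece $\fm^j/\fm^{j+1}$ is killed by $\fp$ under the Carlitz action because the correction terms raise the valuation — shows $\fp$ annihilates $\fm/\fm^{q^d}$ for the Carlitz action. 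The key point is that $\phi(\pi)$, for $\pi$ a generator of $\fp$, acts on each $\fm^j/\fm^{j+1}$ as multiplication by $\bar\pi = 0 \in A/\fp$ plus an operator landing in $\fm^{j+1}$, hence is zero on the graded piece.

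\medskip

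\emph{Main obstacle.}
The routine part is the filtration bookkeeping; the point requiring care is making precise that the Frobenius correction terms in $\phi(a)$ strictly raise $\fP$-adic valuation, i.e. that $\phi(a)(x) \equiv \bar a \cdot x \pmod{\fm \cdot (x)}$ for $x \in \fm$, uniformly in $a$. This follows because $\phi(a) = a + (\text{polynomial in } F \text{ with coefficients in } A \text{ and no constant term in } F)$, and $F(x) = x^q \in \fm^q$ when $x \in \fm$; since $q \geq 2$ this is in $\fm^{j+1}$ whenever $x \in \fm^j$. One should also double-check the claim $\fm = (\lambda)$, but this was already recorded just before the lemma ($\fP = R\lambda$, and completion is faithfully flat), and it is what identifies the truncation level $q^d$ here with the level $\fP^{q^d}$ appearing in Theorems \ref{thmdiff}, \ref{thmcandidate} and Proposition \ref{propfast}.
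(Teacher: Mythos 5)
There is a genuine gap, and it occurs at the same place in both halves of your argument: the inference ``$\fp$ kills every graded piece $\fm^j/\fm^{j+1}$, and $\fm/\fm^{q^d}$ is a successive extension of such pieces, hence $\fp$ annihilates $\fm/\fm^{q^d}$'' is false. An extension of modules killed by $\fp$ need not be killed by $\fp$ (compare $\bZ/p^2\bZ$, an extension of $\bZ/p\bZ$ by $\bZ/p\bZ$ on which $p$ acts nonzero). What the one-step shift $\fp\cdot\fm^j\subseteq\fm^{j+1}$ actually proves is that $\fp$ acts \emph{nilpotently} on $\fm/\fm^{q^d}$, which is strictly weaker than the lemma: ``factors over $A/\fp$'' means a generator of $\fp$ acts as the zero endomorphism. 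For the linear action the repair is cheap and you already wrote down the needed fact in passing: since $\fp$ is totally ramified in $L/K$ with ramification index $q^d-1$, one has $\fp\,\cO_{Y,\fP}^\wedge=\fm^{q^d-1}$, hence $\fp\cdot\fm\subseteq\fm^{q^d}$ in a single step. You recorded $\Ord_\fP(\pi)=q^d-1$ but then only invoked ``$\geq 1$''; it is the full equality that does the work.

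For the Carlitz action the gap is more substantial, because the analogous single-step estimate requires an input you never establish: writing $\phi(f)=a_0+a_1F+\cdots+a_{d-1}F^{d-1}+F^d$ for $f$ a generator of $\fp$, one needs that \emph{all} the coefficients $a_i$ with $0\leq i\leq d-1$ lie in $\fp$ (this is the statement that the Carlitz module reduces to $F^d$ modulo $\fp$, the analogue of $X^p-1\equiv(X-1)^p \bmod p$). Granting it, for $x\in\fm$ each term $a_ix^{q^i}$ has $\fP$-adic valuation at least $(q^d-1)+q^i\geq q^d$ and $x^{q^d}\in\fm^{q^d}$, so $\phi(f)$ maps $\fm$ into $\fm^{q^d}$ outright. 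Without it the argument fails: your observation that the Frobenius terms ``raise the valuation'' only gives $a_1x^q\in\fm^{q}$ when $x\in\fm$, and if $a_1$ were a unit at $\fP$ this term would have valuation as small as $q<q^d$ (for $d>1$), so $\phi(f)$ would not kill $\fm/\fm^{q^d}$. The divisibility $a_i\in\fp$ is the essential content of the lemma and must be proved or cited, not bypassed by the filtration.
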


\begin{proof}
Note that $\fp \cO_{Y,\fP}^\wedge = \fm^{q^d-1}$. In particular the assertion is immediate for the linear action. For the Carlitz action, consider a generator $f$ of $\fp$. Then 
\[
	\varphi(f) = a_0 + a_1 F + \cdots + a_{d-1} F^{d-1} + F^d
\]
with $a_i \in \fp$ for all $i$. From this it follows that $\varphi(f)$ maps
$\fm\subset \cO_{Y,\fP}^\wedge$ into $\fm^{q^d}$, as desired.
\end{proof}

The Carlitz exponential series
\[
	e(z) = \sum_{n=1}^\infty e_n z^n \in K[[z]]
\]
has the property that for all $n<q^d$ the coefficient $e_n$ is $\fp$-integral, so
the truncated and reduced exponential power series
\[
	\bar{e}(z) = \sum_{n=1}^{q^d-1} e_n z^n \in (A/\fp)[[z]]/(z^{q^d})
\]
defines a $k$-linear map
\[
	\bar{e}\colon \fm/\fm^{q^d} \to \fm/\fm^{q^d}
\]
which is an isomorphism because it induces the identity map on the intermediate quotients 
$\fm^i/\fm^{i+1}$. Note that $\bar{e}$ is $G$-equivariant, as the coefficients $e_i$ of the Carlitz exponential lie in $K$.

\begin{lemma}\label{truncatedfunctional}
For all $x\in \fm/\fm^{q^d}$ and $a\in A$ we have $\bar{e}(ax)=\varphi(a)\bar{e}(x)$.
\end{lemma}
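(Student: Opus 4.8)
The plan is to prove the functional equation $\bar e(ax) = \varphi(a)\bar e(x)$ on $\fm/\fm^{q^d}$ by reducing it modulo $\fP^{q^d}$ from the genuine functional equation satisfied by the Carlitz exponential over $\bar K_\infty$. First I would recall that the defining identity $e(tz) = e(z)^q + t\,e(z)$ of the Carlitz exponential, combined with $\varphi(t) = t + F$, gives by induction on $a$ (as a $k$-algebra, generated by $t$) the identity $e(az) = \varphi(a)(e(z))$ for every $a \in A$, where $\varphi(a)$ is applied to $e(z)$ as an additive polynomial in $F$; this is the standard functional equation of the Carlitz module, valid as an identity of power series in $K[[z]]$ since $e$ has $\fp$-integral coefficients in the relevant range. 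By $k$-linearity of both sides and of $\bar e$, it suffices to treat $a$ a power of $t$, or even just $a = t$, and then iterate.

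The key point is then to check that truncating modulo $z^{q^d}$ and reducing coefficients modulo $\fp$ is compatible with both sides of $e(tz) = e(z)^q + t\,e(z)$. For the right-hand side: $e(z)^q$ has lowest-degree term $z^q$, and more importantly, raising to the $q$-th power sends the ideal $(z^{q^d})$ into $(z^{q^{d+1}}) \subset (z^{q^d})$, so modulo $z^{q^d}$ the expression $e(z)^q$ only depends on $e(z) \bmod z^{q^{d-1}}$, hence a fortiori on $\bar e$; and $t\,e(z)$ visibly only involves the truncation. On the domain side, I would use the two lemmas already established: that $\fp\,\cO_{Y,\fP}^\wedge = \fm^{q^d-1}$, so $\fp$ kills $\fm/\fm^{q^d}$ and the reduced coefficients $e_n \bmod \fp$ act well; and that $\varphi(f)$ for $f$ a generator of $\fp$ maps $\fm$ into $\fm^{q^d}$, which is exactly what makes the Carlitz action factor through $A/\fp$ so that $\varphi(a)$ makes sense on $\fm/\fm^{q^d}$ for $a \in A/\fp$. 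Concretely, for $x \in \fm$ with image $\bar x \in \fm/\fm^{q^d}$, one has $e(x) \equiv \bar e(\bar x) \pmod{\fm^{q^d}}$ because the tail $\sum_{n \geq q^d} e_n x^n$ lies in $\fm^{q^d}$ (here one uses that $x \in \fm$ and $e_n$ is $\fp$-integral, or rather that $\fm^{q^d} \subset \fm^n$); then applying $e(tx) = e(x)^q + t\,e(x)$ and reducing modulo $\fm^{q^d}$ yields $\bar e(t\bar x) = \varphi(t)(\bar e(\bar x))$, where on the right $F$ acts as the $q$-th power map on $\fm/\fm^{q^d}$ — which is legitimate since $F$ sends $\fm^i$ to $\fm^{qi}$, hence $\fm^{q^d}$ to $\fm^{q^{d+1}} \subset \fm^{q^d}$.

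The main obstacle, such as it is, is bookkeeping: one must be careful that every truncation is taken at the right power of $\fm$ and that the various reductions (of the power series coefficients modulo $\fp$, of ring elements modulo $\fm^{q^d}$, and of the action of $F$ and of $\varphi(f)$) are mutually compatible — in particular one should verify that the $\fp$-integrality of $e_n$ for $n < q^d$ really is all that is needed, i.e. that no $e_n$ with $n \geq q^d$ enters after truncation. Once the identity is checked for $a = t$, the general case follows formally: the set of $a \in A$ for which $\bar e(ax) = \varphi(a)\bar e(x)$ holds for all $x$ is closed under addition (both sides additive in $a$) and under multiplication (if it holds for $a$ and for $b$, then $\bar e((ab)x) = \bar e(a(bx)) = \varphi(a)\bar e(bx) = \varphi(a)\varphi(b)\bar e(x) = \varphi(ab)\bar e(x)$, using that $\varphi$ is a ring homomorphism and $\bar e$ commutes with the linear action in the sense just proved), and contains $k$ and $t$, hence is all of $A$; and since both actions factor through $A/\fp$ the statement descends to $a \in A/\fp$.
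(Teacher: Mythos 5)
Your overall strategy is the same as the paper's: reduce the single functional equation $e(tz)=te(z)+e(z)^q$ coefficient-by-coefficient modulo $\fp$ and modulo $z^{q^d}$ to obtain $\bar e(tz)=t\bar e(z)+\bar e(z)^q$ in $(A/\fp)[[z]]/(z^{q^d})$ (this part of your argument is fine: comparing coefficients of $z^n$ for $n<q^d$ involves only the $\fp$-integral coefficients $e_i$ with $i<q^d$), and then bootstrap from $a=t$ to all of $A$ by additivity and multiplicativity. However, your ``concrete'' bridge from the power-series identity to the identity on $\fm/\fm^{q^d}$ --- the claim that $e(x)\equiv\bar e(\bar x)\pmod{\fm^{q^d}}$ for $x\in\fm$ because the tail $\sum_{n\geq q^d}e_nx^n$ lies in $\fm^{q^d}$ --- is false. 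The $\fp$-integrality of the coefficients holds only for $n<q^d$, and that bound is sharp: the coefficient of $z^{q^d}$ is $1/D_d$ with $D_d=[d][d-1]^q\cdots[1]^{q^{d-1}}$ and $[d]=t^{q^d}-t$ divisible by the monic generator of $\fp$, so this coefficient has $\fP$-valuation $-(q^d-1)$. Taking $x=\lambda$ a uniformizer, the term $(1/D_d)\lambda^{q^d}$ has valuation exactly $1$, nowhere near $q^d$. So that congruence, and with it your derivation of $\bar e(t\bar x)=\varphi(t)\bar e(\bar x)$ by ``applying $e(tx)=e(x)^q+te(x)$ and reducing modulo $\fm^{q^d}$,'' does not go through as written.

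The repair requires no new idea and is contained in the first half of your own argument: do not evaluate the full exponential at all. Having established the identity of truncated polynomials $\bar e(tz)=t\bar e(z)+\bar e(z)^q$ in $(A/\fp)[z]/(z^{q^d})$, substitute $z\mapsto\bar x$ directly. Such a substitution is well defined on $\fm/\fm^{q^d}$ because monomials $z^n$ with $n\geq q^d$ send $\fm$ into $\fm^{q^d}$ and coefficients in $\fp$ send $\fm$ into $\fp\fm=\fm^{q^d}$, and it is a ring homomorphism, hence compatible with the $q$-th power on the right-hand side. This is precisely the paper's route. Your closing reduction from $a=t$ to general $a\in A$ (additivity and multiplicativity of the set of $a$ for which the identity holds) is correct.
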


\begin{proof}
In $K[[z]]$ we have the identity
\[
	e(tz) = te(z) + e(z)^q
\]
of formal power series. Identifying coefficients on both sides we find that in
$(A/\fp)[[z]]/(z^{q^d})$
we have
\[
	\bar{e}(tz) = t\bar{e}(z) + \bar{e}(z)^q,
\] 
and we deduce that for all $a\in A$ and $x\in \fm/\fm^{q^d}$ we have
$\bar{e}(ax) = \phi(a)\bar{e}(x)$.
\end{proof}

Put $\bar\pi := \bar{e}^{-1}(\bar\lambda)$, where $\bar\lambda$ is the image of
$\lambda \in \fm$ in $\fm/\fm^{q^d}$. 

\begin{lemma}\label{lemmapi} For all $g\in G$ we have $g\bar\pi = \chi(g)\bar\pi$.
\end{lemma}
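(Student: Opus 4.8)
The plan is to deduce the transformation law for $\bar\pi$ directly from the definitions, using the two facts established just before the statement: that $\bar e$ is $G$-equivariant, and that $\bar e$ intertwines the linear and Carlitz $A$-module structures on $\fm/\fm^{q^d}$ (Lemma \ref{truncatedfunctional}). The starting point is that $\lambda$ is a primitive $\fp$-torsion point of the Carlitz module, so by definition of the Galois representation $\rho$ and its quotient $\chi\colon G \isomto (A/\fp)^\times$, for every $g \in G$ we have $g\lambda = \varphi(a_g)(\lambda)$ for some $a_g \in A$ reducing to $\chi(g) \in (A/\fp)^\times$; here $\varphi(a_g)(\lambda)$ denotes the Carlitz action of $a_g$ on $\lambda$. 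Reducing modulo $\fm^{q^d}$, this reads $g\bar\lambda = \varphi(a_g)\bar\lambda$, i.e. $g\bar\lambda = \chi(g) \cdot_{\mathrm{Carlitz}} \bar\lambda$ using the Carlitz action on $\fm/\fm^{q^d}$, which by the preceding lemma factors through $A/\fp$ and so only depends on $\chi(g)$.

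Now I would apply $\bar e^{-1}$. Since $\bar e$ is $G$-equivariant, $\bar e^{-1}$ is too, so $\bar e^{-1}(g\bar\lambda) = g\bar e^{-1}(\bar\lambda) = g\bar\pi$. On the other hand, by Lemma \ref{truncatedfunctional} applied with $a = a_g$ and $x = \bar\pi$, we have $\bar e(a_g\bar\pi) = \varphi(a_g)\bar e(\bar\pi) = \varphi(a_g)\bar\lambda = g\bar\lambda$, hence $\bar e^{-1}(g\bar\lambda) = a_g\bar\pi$, where now $a_g$ acts via the \emph{linear} $A$-module structure. Since the linear action on $\fm/\fm^{q^d}$ also factors through $A/\fp$ (again by the preceding lemma), $a_g\bar\pi = \chi(g)\bar\pi$. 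Combining the two computations gives $g\bar\pi = \chi(g)\bar\pi$, which is the claim.

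The only genuinely substantive point — and the step I expect to need the most care — is the very first one: making precise that the Galois action on the primitive torsion point $\lambda$ is given by the Carlitz action of a lift of $\chi(g)$, and then checking that this passes correctly to $\fm/\fm^{q^d}$ so that it can be compared against the two $A/\fp$-actions already set up. This is essentially the content of the cited proposition (the $\Gal(K^\sep/K)$-action on $C[\fp]$ via $\rho$) transported to the complete local ring at $\fP$, but one must be slightly careful that the Carlitz action appearing there is the same one (coming from $\varphi$) that was used to define the Carlitz action on $\fm/\fm^{q^d}$, and that $\bar e$ is defined so as to intertwine exactly these. Everything else is formal manipulation with the equivariance of $\bar e$ and Lemma \ref{truncatedfunctional}; no estimates or further structural input are needed.
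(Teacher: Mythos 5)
Your proof is correct and follows essentially the same route as the paper: both start from $g\bar\lambda=\varphi(a_g)\bar\lambda$ with $a_g$ a lift of $\chi(g)$, apply $\bar e^{-1}$ using its $G$-equivariance, and convert the Carlitz action into the linear action via Lemma \ref{truncatedfunctional}. Your extra care in distinguishing the two $A/\fp$-module structures is exactly the point the paper's terser proof leaves implicit.
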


In other words $\bar\pi \in \fm/\fm^{q^d}(\chi)$.

\begin{proof}[Proof of Lemma \ref{lemmapi}]
Let $g\in G$ and $a\in A$ be so that $a$ reduces to $g$ in $G=(A/\fp)^\times$.
Since $\lambda$ is a $\fp$-torsion point of the Carlitz module we have that
\[
	g\bar\lambda = \varphi(a)\bar\lambda.
\]
Applying $\bar{e}^{-1}$ to both sides we find with Lemma \ref{truncatedfunctional} that
\[
	g\bar\pi = a\bar\pi
\]
and by definition $a\bar\pi$ equals $\chi(g)\bar\pi$.
\end{proof}

Choose a lift $\pi \in \fm$ of $\bar\pi$ such that $g\pi = \chi(g)\pi$
for all $g$. Then $\pi$ is a uniformizing element of $L_\fp$.

\begin{proposition}
Let $1 \leq n < q^d-1$. Then
\[
	\dlog \lambda_n =  (BC_n \pi^n + \delta) \dlog \pi
\]
for some $\delta \in \fm^{n+q^d-1}$.
\end{proposition}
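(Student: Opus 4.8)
The plan is to compute $\dlog\lambda_n$ by working in the completion $L_\fP$ and expressing everything in terms of the Carlitz-symmetric uniformizer $\pi$. First I would write $\lambda_n = -\sum_{g\in G}\chi(g)^{-n}\otimes(g\lambda/\lambda)$ as in the previous proposition, and note that by Lemma \ref{lemmapi} we have a second natural "uniformizer" $\pi$ with $g\pi=\chi(g)\pi$. The ratio $\lambda/\pi$ is then $G$-invariant modulo $\fm^{q^d-1}$ (more precisely: $g(\lambda/\pi)\equiv \lambda/\pi$ to enough precision), so replacing $g\lambda/\lambda$ by $g\pi/\pi$ times a correction is the basic move. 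Concretely, since $\bar\lambda=\bar e(\bar\pi)=\sum_{1\le m<q^d}e_m\bar\pi^m$ in $\fm/\fm^{q^d}$, I get
\[
	\frac{g\lambda}{\lambda} \equiv \frac{\bar e(\chi(g)\pi)}{\bar e(\pi)} \pmod{\fm^{q^d-1}},
\]
using $g\pi=\chi(g)\pi$ and $G$-equivariance of $\bar e$. The key observation is that $z/e(z)=\sum_{m\ge 0}BC_m z^m$, so $e(\chi(g)\pi)/e(\pi) = \chi(g)\cdot\big(z/e(z)\big)\big|_{z=\pi}\big/\big(z/e(z)\big)\big|_{z=\chi(g)\pi}$, i.e.
\[
	\frac{\bar e(\chi(g)\pi)}{\bar e(\pi)}
	= \chi(g)\cdot\frac{\sum_m BC_m \pi^m}{\sum_m BC_m \chi(g)^m\pi^m}.
\]

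Next I would take $\dlog$ of $1\otimes\lambda$ and extract the $\chi^n$-component. Since $\dlog$ of a $G$-invariant quantity contributes only to the trivial component, and $\dlog\pi$ generates $\Omega_{R}$ locally at $\fP$, I expand: $\dlog\lambda_n = -\sum_g\chi(g)^{-n}\,\dlog(g\lambda/\lambda)$, and $\dlog(g\lambda/\lambda) \equiv \dlog\chi(g) + \dlog\big(\sum_m BC_m\pi^m\big) - \dlog\big(\sum_m BC_m\chi(g)^m\pi^m\big)$. The first two terms are $G$-independent (the first is zero, being $\dlog$ of a constant; the second contributes only to the $\chi^0$-part after summing against $\chi^{-n}$ with $n\not\equiv 0$), so only the last survives:
\[
	\dlog\lambda_n \equiv \sum_{g\in G}\chi(g)^{-n}\,\dlog\!\Big(\sum_{m\ge 0} BC_m\,\chi(g)^m\pi^m\Big)
	\pmod{\text{higher order}}.
\]
Now $\dlog(\sum_m BC_m\chi(g)^m\pi^m) = \big(\sum_{m\ge 1} m\,BC_m\chi(g)^m\pi^{m-1}\,\dlog\pi\big)\big/\big(\sum_m BC_m\chi(g)^m\pi^m\big)$; expanding the geometric series and summing over $g$ via the orthogonality relation $\sum_{g}\chi(g)^{j}=0$ unless $q^d-1\mid j$, only the $j\equiv n$ terms survive. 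The lowest-order surviving term comes from $m=n$ in the numerator (the denominator contributes $1$ plus $\fm^{q-1}$), giving exactly $n\,BC_n\,\pi^n\,\dlog\pi$; but $q-1\mid n$ forces $n\equiv 0$ in $k$ unless — wait, $n$ need not be divisible by $p$, so $n\ne 0$ in $k$ is not guaranteed. This is the point to be careful: I should instead work directly with $z/e(z)$ rather than differentiating $\sum BC_m z^m$, i.e. use $\dlog e(\pi) = \dlog\pi - \dlog(z/e(z))|_{\pi}$, so that $\dlog(g\lambda/\lambda) = \dlog(z/e(z))|_{\chi(g)\pi} - \dlog(z/e(z))|_{\pi}$ up to $G$-invariant terms, and then $\dlog(z/e(z))|_{w} = \big(\sum_{m\ge 1} BC_m m w^{m-1}dw\big)/\big(\sum BC_m w^m\big)$ — here $d w/w$ does not appear, only $dw$, and $dw = \chi(g)d\pi$ contributes the needed factor. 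Carrying this out, the coefficient of $\pi^n\dlog\pi$ is $BC_n$ exactly (the $\chi$-orthogonality kills all other monomials), and all omitted terms lie in $\fm^{n+q^d-1}\dlog\pi$ because the next contribution to the $\chi^n$-isotypic part arises from shifting the exponent by a multiple of $q^d-1$.

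The main obstacle is bookkeeping the error term: I must show that every monomial in $\pi$ other than $\pi^n$ that contributes to $\dlog\lambda_n$ has exponent $\ge n+q^d-1$. This follows because contributions to the $\chi^n$-component come from monomials $\pi^m$ with $m\equiv n\pmod{q^d-1}$ (by the orthogonality relations for $\chi$), and the next such $m$ after $n$ is $n+q^d-1$; one also needs that the denominators $\sum BC_m w^m = 1+O(w^{q-1})$ are units and that multiplying by them preserves this congruence structure, together with the fact that the truncation $\bar e$ modulo $\fm^{q^d}$ introduces no error below degree $q^d$ since $e_m$ is $\fp$-integral for $m<q^d$. I would phrase the whole computation inside $(A/\fp)[[z]]$, substitute $z=\pi$ at the end, and invoke $\fp\cO_{Y,\fP}^\wedge=\fm^{q^d-1}$ to control which terms vanish modulo $\fp$.
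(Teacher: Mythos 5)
Your route---averaging $\dlog(g\lambda/\lambda)$ against $\chi^{-n}$ and using $\bar\lambda=\bar e(\bar\pi)$ together with $g\pi=\chi(g)\pi$---is a legitimate variant of the paper's argument, and your error-term bookkeeping (contributions to the $\chi^n$-part come only from exponents $m\equiv n\pmod{q^d-1}$, the next one being $n+q^d-1$) is exactly right. But the central claim, that ``the coefficient of $\pi^n\dlog\pi$ is $BC_n$ exactly,'' is asserted rather than proved, and it is precisely the point where the spurious factor you worried about must be killed. Writing $B(z)=z/e(z)=\sum_m BC_m z^m$, both versions of your computation produce as coefficient of $\pi^n$ the $z^n$-coefficient of $-zB'(z)/B(z)$, not of $B(z)$: your ``fix'' of passing to $\dlog(z/e(z))$ does not remove the factor $m$, since the numerator $\sum_{m\ge1} m\,BC_m w^{m-1}\,\diff w$ still carries it, and the denominator $B(w)$ contributes further cross terms. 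Character orthogonality alone therefore only isolates $c_n:=[z^n]\bigl(zB'(z)/B(z)\bigr)$, which is a priori a combination of $n\,BC_n$ and products of lower $BC_m$'s. The missing ingredient is the power series identity
\[
	-\,\frac{zB'(z)}{B(z)} \;=\; B(z)-1,
\]
equivalently $e'(z)=1$, which holds because $e(z)=z+e_1z^q+e_2z^{q^2}+\cdots$ has all nonlinear exponents divisible by $q$, hence by $p$. Only with this identity does $-c_n=BC_n$ for $n\ge1$, and your argument closes.

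This is also why the paper's proof is so short: it invokes $e'=1$ at the outset to get $\diff\lambda=(1+\delta_2)\,\diff\pi$ with $\delta_2\in\fm^{q^d}$, hence
\[
	\dlog\lambda=\Bigl(\sum_{m=0}^{q^d-2}BC_m\pi^m+\delta_3\Bigr)\dlog\pi
\]
with $\delta_3\in\fm^{q^d-1}$, and then simply reads off the $\chi^n$-isotypic component (your averaging over $G$ is the same operation). Either insert and prove the displayed identity at the marked step, or restructure along the paper's lines; as written, ``the $\chi$-orthogonality kills all other monomials'' does not by itself produce the coefficient $BC_n$.
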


\begin{proof}
Since $\bar\lambda = \bar{e}(\bar\pi)$ we have in $\cO_{Y,\fP}^\wedge$ the identity
\[
	\lambda = \sum_{n=1}^{q^d-1} e_n \pi^n + \delta_1
\]
for some $\delta_1 \in \fm^{q^d}$. Since $\diff \pi^n = 0$ for any $n$ divisible by $q$ we
find
\[
	\diff\lambda = (1 + \delta_2) \diff \pi
\]
for some $\delta_2 \in \fm^{q^d}$. Dividing both expressions we find
\[
	\dlog \lambda = \left(\sum_{n=0}^{q^d-2} BC_n \pi^n + \delta_3\right) \dlog \pi
\]
for some $\delta_3\in \fm^{q^d-1}$. Now the proposition follows from decomposing this identity in isotypical components, since $\dlog \pi$ is
$G$-invariant and $g\pi = \chi(g)\pi$ for all $g\in G$.
\end{proof}

We can now finish the proof of Theorem \ref{thmcandidate}.

\begin{proof}[Proof of Theorem \ref{thmcandidate}]
If $n>1$ then the Theorem follows from the above proposition. 
If $n=1$ we consider two cases. Either $q>2$ and then $BC_1=0$ and $\dlog \lambda_1=0$, or else
$q=2$ and then $\fp$ does not divide $BC_1$ and from the above $\pi$-adic expansion we
see that $\dlog \lambda_1$ does not map to zero in $\Omega_R/\fP^{q^d} \Omega_R$. In both
cases the theorem holds.
\end{proof}

\section{Vanishing of $\lambda_n$}\label{secvanishing}

Let $W$ be the ring of Witt vectors of $A/\fp$. For $a\in (A/\fp)^\times$ we denote by
$\tilde{a} \in W^\times$ the Teichm\"uller lift of $a$. Also, we denote by 
$\tilde\chi \colon G \to W^\times$ the Teichm\"uller lift of the character
$\chi \colon G \to (A/\fp)^\times$. 
If $M$ is a $W[G]$-module then it decomposes into isotypical components
\[
	M=\bigoplus_{n=1}^{q^d-1} M(\tilde\chi^n)
\]
with $G$ acting via $\tilde\chi^n$ on $M(\tilde\chi^n)$.

Put $U := W \otimes_\bZ \Gamma( Y, \cO_Y^\times )$ and let $D$ be the $W$-module of degree zero $W$-divisors on $X-Y$. Then we have a natural inclusion $U \shortinjto D$ with finite quotient.
Consider the decomposition of $1\otimes \lambda \in W \otimes \Gamma( Y-\fP, \cO_Y^\times)$ in
isotypical components:
\[
	1 \otimes \lambda = \sum_{n=1}^{q^d-1} \tilde{\lambda}_n \quad 
	\text{ with } \quad
	\tilde\lambda_n \in W \otimes_\bZ \Gamma( Y-\fP, \cO_Y^\times)(\tilde{\chi}^n).
\]
We have
\[
	\tilde\lambda_n = \sum_{g\in G} \chi(g)^{-n} \otimes g\lambda 
\]
and for $1<n<q^d-1$ we have that $\tilde\lambda_n$ lies in $U(\tilde\chi^n)$ and it maps to 
$\lambda_n$ under the reduction map
\[
	U \surjto 
	A/\fp \otimes_\bZ \Gamma( Y, \cO_Y^\times ).
\]
If $n$ is divisible by $q-1$ but not by $q^d-1$, the $W$-modules $D(\tilde\chi^n)$ and $U(\tilde\chi^n)$ 
are free of rank one. In particular
\[
	\lambda_n = 0 \, \text{ if and only if } \,
	\frac{ U(\tilde\chi^n) }{ W\tilde\lambda_n } \neq 0,
\]
and Theorem \ref{thmvanishing} follows from the following.

\begin{proposition}\label{proplengths}
Let $n$ be divisible by $q-1$ but not by $q^d-1$. Then the finite
$W$-modules
\[
	\frac{U(\tilde\chi^n)}{W\lambda_n}
\]
and
\[
	W \otimes_\bZ \Pic Y(\tilde\chi^n)
\]
have the same length.
\end{proposition}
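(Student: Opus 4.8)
\textbf{Proof plan for Proposition \ref{proplengths}.}

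The plan is to compute both lengths by relating them to a single ``unit index'' computation, following the characteristic-zero $L$-function method of Galovich and Rosen \cite{Galovich81}. First I would set up the relevant exact sequences of $W[G]$-modules. The inclusion $U \shortinjto D$ with finite quotient gives, after passing to the $\tilde\chi^n$-isotypical component (which is $W$-free of rank one on both sides for $n$ divisible by $q-1$ but not by $q^d-1$), a length identity $\length_W D(\tilde\chi^n)/U(\tilde\chi^n) = \Ord_W \det(\text{regulator-type matrix})$. On the other hand, the divisor class sequence $0 \to \Pic^0 Y \to D/U \to (\text{finite}) \to \cdots$, or more simply the defining sequence $U \to D \to D/U \to 0$ together with $0 \to \Pic^0 Y \to \Pic Y \to \bZ \to 0$, expresses $\length_W W\otimes_\bZ \Pic Y(\tilde\chi^n)$ in terms of $\length_W D(\tilde\chi^n)/U(\tilde\chi^n)$ and $\length_W D(\tilde\chi^n)/W\tilde\lambda_n$. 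Subtracting, the claim $\length_W U(\tilde\chi^n)/W\tilde\lambda_n = \length_W W\otimes_\bZ \Pic Y(\tilde\chi^n)$ becomes equivalent to the single identity
\[
	\length_W \frac{D(\tilde\chi^n)}{W\tilde\lambda_n} = \length_W \frac{D(\tilde\chi^n)}{U(\tilde\chi^n)} + \length_W W\otimes_\bZ \Pic^0 Y(\tilde\chi^n),
\]
which will follow from the snake lemma applied to the diagram relating $W\tilde\lambda_n \hookrightarrow U \hookrightarrow D$, once we know $\tilde\lambda_n$ generates a finite-index submodule of $D(\tilde\chi^n)$.

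The arithmetic input — the place where the $L$-function enters — is an explicit formula for the divisor $\divi(\lambda)$ on $X$ and hence for $\tilde\lambda_n$ inside $D(\tilde\chi^n)$. Since $\lambda$ is a uniformizer at $\fP$ and a unit away from $\fP$ and $\infty$, its divisor is supported on the places above $\fP$ and $\infty$; the places above $\infty$ are permuted transitively by $G$ through the quotient $G \to k^\times$ (by part (2) of the cited proposition on $\rho$), so the $\infty$-part of $1\otimes\lambda$ contributes only to components $\tilde\chi^n$ with $n \equiv 0 \pmod{q^d-1}$ after the $G$-averaging, i.e.\ it drops out for our $n$. Thus $\tilde\lambda_n$ is, up to units, determined by the single place $\fP$, and $\length_W D(\tilde\chi^n)/W\tilde\lambda_n$ is computed by a ``Gauss sum'' $\sum_{g\in G}\chi(g)^{-n}\otimes g\lambda$ evaluated $\fP$-adically. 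Here I would invoke the index computation: the relevant length equals $\Ord_{\tilde\chi^n}$ of a Stickelberger-type element, which by the analytic class number formula for the function field $L$ — exactly as in Galovich–Rosen — equals $\length_W W\otimes_\bZ \Pic^0 Y(\tilde\chi^n) + \length_W D(\tilde\chi^n)/U(\tilde\chi^n)$. Substituting back gives the proposition.

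The main obstacle I anticipate is the bookkeeping at $\infty$ and the verification that the $\tilde\chi^n$-component of the regulator/unit-index matrix is genuinely given by the expected $L$-value rather than differing by an extra factor (a Carlitz-factorial or a power of a uniformizer), since $\lambda$ is a specific torsion point and not a general cyclotomic unit. Controlling this amounts to checking that the map $W\tilde\lambda_n \hookrightarrow D(\tilde\chi^n)$ has the same cokernel-length as the cyclotomic-unit-to-divisor map studied by Galovich–Rosen, which should follow because $\lambda$ and the standard cyclotomic units differ at $\fP$ only by units of $\cO_{Y,\fP}^\wedge$ — a consequence of the $\pi$-adic expansion of $\lambda$ established in Section \ref{seccandidate} (where $\lambda = \pi + \cdots$ with $\pi$ a $\chi$-eigen-uniformizer). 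A secondary technical point is ensuring that $\tilde\lambda_n \neq 0$ in $D(\tilde\chi^n)\otimes_W \mathrm{Frac}(W)$, so that all the quotients in question are finite; this is where the hypothesis $n \not\equiv 0 \pmod{q^d-1}$ is used, via non-vanishing of the corresponding $L$-value at the relevant character, again as in \cite{Galovich81}.
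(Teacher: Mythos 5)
Your overall strategy (reduce to a single index computation and evaluate it by the characteristic-zero $L$-function method of Galovich--Rosen, with Goss--Sinnott/class-number-formula input giving the length of the Jacobian eigencomponent) is the same as the paper's. But the key arithmetic step in your plan is backwards, and as stated it would fail.

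You claim that after $G$-averaging the $\infty$-part of $\divi(\lambda)$ ``drops out'' for $n\not\equiv 0 \pmod{q^d-1}$, so that $\tilde\lambda_n$ is determined by the single place $\fP$ and the length of $D(\tilde\chi^n)/W\tilde\lambda_n$ is computed ``$\fP$-adically.'' In fact the opposite is true. The module $D$ in the paper is by definition the module of degree-zero $W$-divisors on $X-Y$, i.e.\ supported only at the places above $\infty$; the prime $\fP$ does not enter $D$ at all. Moreover the $\fP$-contribution to $\tilde\lambda_n=\sum_g \chi(g)^{-n}\otimes g\lambda$ is the one that vanishes: $\fP$ is totally ramified and $G$-fixed, $v_\fP(g\lambda)=1$ for every $g$, and $\sum_{g}\chi(g)^{-n}=0$ for $n\not\equiv 0\pmod{q^d-1}$. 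The entire content of the computation lives at a place $v$ above $\infty$, where the valuations $v(g\lambda)$ are \emph{not} constant in $g$: by Galovich--Rosen one has $v(g\lambda)=u\deg g+w$ with $(u,p)=1$, and it is precisely the $\deg g$ term that produces the Stickelberger-type sum $\sum_g(\deg g)\tilde g^{-n}=(q-1)L(1,\tilde\chi^{-n})$, matching (via Goss--Sinnott, applied to $\Pic^0 X$ for $X$ the compactification --- not $\Pic^0 Y$, as you write) the length of the Jacobian eigencomponent. With your localization at $\fP$ there is no nontrivial Gauss sum to evaluate, so the identity you need has no source. The surrounding exact-sequence bookkeeping in your first paragraph is fine and agrees with the paper, but the proof cannot be completed along the lines of your second and third paragraphs without replacing the $\fP$-adic evaluation by the valuation formula at infinity.
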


\begin{proof}
Let $X$ be the canonical compactification of $Y$.
Since we have a short
exact sequence of $W$-modules
\[
	0 \longto
	\frac{  D(\tilde\chi^n)}{ U(\tilde\chi^n) }
	\longto
	W\otimes_\bZ (\Pic^0 X) (\tilde\chi^n) \to W \otimes_{\bZ}(\Pic Y)(\tilde\chi^n) 
	\longto 0,
\]
it suffices to show that
\[
	\frac{D(\tilde\chi^n)}{W\lambda_n}
	\, \text{ and } \,
	W\otimes_\bZ (\Pic^0 X)(\tilde\chi^n)
\]
have the same length. By Goss and Sinnott \cite{Goss85} the length of $W\otimes_\bZ (\Pic^0 X) (\tilde\chi^n)$ is the $p$-adic valuation of $L(1, \tilde\chi^{-n}) \in W$.
We will show that also the length of $D(\tilde\chi^n)/W\lambda_n$
equals the $p$-adic valuation of $L(1, \tilde\chi^{-n})$.

Since $n$ is divisible by $q-1$, the representation
$\tilde\chi^{-n}$ is unramified at $\infty$. Since all the points of
$X$ lying above $\infty$ are $k$-rational, the local $L$-factor at $\infty$ of
$L(T, \tilde\chi^{-n})$ is $(1-T)^{-1}$. Since $n$ is not divisible by $q^d-1$, the representation is ramified at $\fp$ and hence the local $L$-factor at $\fp$ is $1$.
Recall that for a prime $\fq\subset A$ coprime with $\fp$ we have that $\chi(\Frob_\fq)$ is the
image of the monic generator of $\fq$ in $(A/\fp)^\times$. Together with unique factorization in $A$ we obtain
\[
	L( T, \tilde\chi^{-n} ) = 
	(1-T)^{-1} 
	\sum_{a\in A_+, a\not\in \fp} \tilde{a}^{-n}T^{\deg a},
\]
where $A_+$ is the set of monic elements of $A$. In fact it is easy to see that for $m\geq d$
the coefficient of $T^m$ in the sum vanishes, so we have
\begin{equation}\label{Lformula}
	L( T, \chi^{-n} ) = 
	(1-T)^{-1}
	\sum_{a\in A^{<d}_+} \tilde{a}^{-n}T^{\deg a},
\end{equation}
where $A_+^{<d}$ is the set of monic elements of degree smaller then $d$.

Since $n$ is divisible by $q-1$ we have
\[
	\sum_{a\in A_+^{<d}} \tilde{a}^{-n}T^{\deg a}
		= \frac{1}{q-1} \sum_{a\in A^{<d}} \tilde{a}^{-n}T^{\deg a}.
\]
We conclude from (\ref{Lformula}) that
\[
	L(1,\tilde\chi^{-n}) = \frac{1}{q-1}\sum_{a\in A^{<d}} (\deg a)\tilde{a}^{-n}.
\]
Consider the function
\[
	\deg\colon G \to \left\{ 0,1,\ldots, d-1 \right\}
\]
which maps $g\in G$ to the degree of its unique representative in $A^{<d}$. Then the above identity can be rewritten as
\[
	L(1,\tilde\chi^{-n}) = \frac{1}{q-1} \sum_{g\in G} (\deg g)\tilde{g}^{-n}.
\]
By \cite[\emph{p.} 372]{Galovich81} there is a point in $X-Y$ with associated valuation $v$ 
and integers $u,w$ with $(u,p)=1$ such that  
\[
	v( g\lambda ) =  u \deg g + w
\]
for all $g\in G$. The valuation $v$ extends to an  isomorphism of $W$-modules
\[
	v\colon D(\tilde\chi^{n}) \to W,
\]
and we have
\begin{eqnarray*}
	v( \lambda_n ) &=& \sum_{g \in G} \tilde{g}^{-n} v(g\lambda) \\
			&=& u (q-1) L(1,\tilde\chi^{-n}) + w \sum_{g\in G} \tilde{g}^{-n} \\
			&=&  u (q-1) L(1,\tilde\chi^{-n}).
\end{eqnarray*}
In particular, the length of $D(\tilde\chi^n)/\lambda_n$ is the $p$-adic valuation of
$L(1,\tilde\chi^{-n})$ and the proposition follows.
\end{proof}

\section{Complement: the class module of $Y$}\label{complement}

Let $L$ be an arbitrary finite extension of $K$ and $R$ the integral closure of $A$ in $L$. 
Put $Y= \Spec R$.  In \cite{Taelman10b} and \cite{Taelman11} we have given several equivalent definitions of a finite $A$-module $H(C/Y)$ depending on $Y$, that is analogous to the class group of a number field.  One of these definitions is the following.

Let $X$ be the canonical compactification of $Y$ and let $\infty$ be the divisor on $X$ of zeroes of $1/t \in L$. (This is also the inverse image of the divisor $\infty $ on $\bP^1$.) Then $H(C/Y)$ is defined by the exact sequence
\begin{equation}\label{defsha}
	A \otimes_k \H^1( X, \cO_X ) \overset{\partial}{\longto}
	A \otimes_k \H^1( X, \cO_X(\infty) ) \longto H(C/Y) \longto 0,
\end{equation}
where
\[
	\partial = 1 \otimes (t+\rmF) - t\otimes 1.
\]

\begin{theorem} \label{thmcomparison}
Let $I\subset A$ be a nonzero ideal. Then there is a natural isomorphism
\[
	\H^1( Y_\fl, C[I]^\rmD )^\vee \isomto H(C/Y) \otimes_A A/I
\]
where $(-)^\vee$ denotes the $k$-linear dual.
\end{theorem}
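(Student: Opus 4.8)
The plan is to compare the flat-cohomological definition of $\H^1(Y_\fl, C[I]^\rmD)$ with the coherent-cohomological presentation (\ref{defsha}) of $H(C/Y)$, using the duality theorem of Artin and Milne (Theorem \ref{thmduality}) as the bridge. The first step is to produce a Raynaud-type resolution of $C[I]_Y$ on $Y_\fl$ of the shape required by Theorem \ref{thmduality}. Working on the compactification $X$ instead of $Y$ is the key technical move: the Carlitz module extends to a $\bG_a$-bundle-type object over $X$, and the operator $\phi_I = \phi(f)$ for a generator $f$ of $I$ (or, for general $I$, the Carlitz action of $I$) realizes $C[I]_X$ as the kernel of a morphism $\alpha - F \colon \cV \to F^\ast\cV$ for a suitable locally free $\cV$ on $X$ — concretely $\cV = \cO_X(\infty)$ or a twist thereof, with $\alpha$ built from the coefficients of $\phi$. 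One must check that the resulting sheaf on $X_\fl$, restricted to $Y_\fl$, is indeed $C[I]_Y$ and not some extension; this amounts to the fact that $C[I]$ is finite flat over $\Spec A$ together with a degree/discriminant count as in Section \ref{seccomparing}.

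Granting that, the dual exact sequence (\ref{dualseq}) reads
\[
	0 \longto \R^1\!f_\ast C[I]^\rmD_X \longto
	\cHom(F^\ast\cV, \Omega_X) \overset{\alpha-\cart}{\longto}
	\cHom(\cV, \Omega_X) \longto 0
\]
with $\R^i\!f_\ast C[I]^\rmD_X = 0$ for $i \neq 1$. Taking cohomology on $X$ and using the vanishing of higher direct images together with the Leray spectral sequence, $\H^1(X_\fl, C[I]^\rmD)$ becomes the cokernel of the induced map on $\H^0$ together with a contribution from $\H^1(X, \cHom(F^\ast\cV,\Omega_X))$; Serre duality on the curve $X$ then identifies $\H^1(X, \cHom(\cV,\Omega_X))$ with $\H^0(X, \cV)^\vee$ and $\H^1(X,\cHom(F^\ast\cV,\Omega_X))$ with $\H^0(X, F^\ast\cV)^\vee = \H^0(X,\cV)^\vee$ (up to the Frobenius twist, which is harmless $k$-linearly). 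The upshot is that $\H^1(X_\fl, C[I]^\rmD)^\vee$ is computed by the complex $\H^0(X,\cV) \to \H^0(X, F^\ast\cV)$ with the transpose of $\alpha - \cart$, i.e. essentially $\alpha - F$ acting on global sections — and after unwinding the identification $\cV \leftrightarrow \cO_X(\infty)$ and the Serre-duality pairing with $\H^1(X,\cO_X)$, this complex is exactly $A\otimes_k \H^1(X,\cO_X) \xrightarrow{1\otimes(t+\rmF)-t\otimes 1} A\otimes_k\H^1(X,\cO_X(\infty))$ of (\ref{defsha}), reduced modulo $I$. The last step is to promote the comparison from $X$ to $Y$: one shows $\H^1(Y_\fl, C[I]^\rmD) = \H^1(X_\fl, C[I]^\rmD)$ because $C[I]^\rmD$ is unramified at the points of $X-Y$ (all of which lie over $\infty$, where by the proposition in the introduction the representation is tamely ramified with image $k^\times$ of order prime to $p$, so $\R^1\!f_\ast$ sees nothing extra), or alternatively one works on $X$ throughout and notes that (\ref{defsha}) is itself phrased on $X$.

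I expect the main obstacle to be the bookkeeping in the second step: matching the Serre-duality identifications and the Frobenius/Cartier twists so that the transpose of $\alpha - \cart$ becomes precisely $\partial = 1\otimes(t+\rmF) - t\otimes 1$, including getting the $\cO_X(\infty)$-twist and the $A$-module (as opposed to merely $k$-module) structures to line up. A secondary subtlety is the naturality claim — the isomorphism should be compatible with change of $I$ (so that for $I = \fp$ one recovers the statement used in Section \ref{sectables} and the third remark of the introduction), which requires checking that the resolution of $C[I]_X$ can be chosen functorially in $I$, e.g. by using $I \mid I'$ induces $C[I] \hookrightarrow C[I']$ and compatible $\alpha$'s. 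The finiteness of $H(C/Y)$ and the exactness at the relevant spot of (\ref{defsha}) are already established in \cite{Taelman10b}, \cite{Taelman11}, so those can be cited rather than reproved.
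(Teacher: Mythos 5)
Your overall strategy (Artin--Milne duality plus Serre duality to match the flat cohomology of $C[I]^\rmD$ with the coherent presentation (\ref{defsha})) is the right one, but the route you take through the compactification has a genuine gap, and it is exactly at the step you flag as "promoting the comparison from $X$ to $Y$". The paper does \emph{not} extend $C[I]$ to a group scheme over $X$. It resolves $C[I]_Y$ over the affine $Y$ by the free sheaf $A/I\otimes_k\bG_a$ using $\partial = 1\otimes(t+F)-t\otimes 1$ (reduced mod $I$ from the standard presentation $0\to A\otimes_k\bG_a\to A\otimes_k\bG_a\to C\to 0$), applies Theorem \ref{thmduality} to get $\H^1(Y_\fl,C[I]^\rmD)=\ker\bigl(A/I\otimes_k\Gamma(Y,\Omega_Y)\xrightarrow{\partial^\ast}A/I\otimes_k\Gamma(Y,\Omega_Y)\bigr)$, and only then passes to $X$ by showing that the subcomplex $\bigl[A/I\otimes_k\Gamma(X,\Omega_X(-\infty))\to A/I\otimes_k\Gamma(X,\Omega_X)\bigr]$ includes quasi-isomorphically (a filtration argument at $\infty$ where $\partial^\ast$ degenerates to $1\otimes t$ on the graded pieces). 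The specific twists $(-\infty,0)$ are forced by that lemma, and they are what make the Serre dual come out as $\H^1(X,\cO_X)\to\H^1(X,\cO_X(\infty))$, i.e.\ exactly (\ref{defsha}) mod $I$.

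Your alternative — realize $C[I]_X$ as $\ker(\alpha-F\colon\cV\to F^\ast\cV)$ with $\cV=\cO_X(\infty)$ and argue $\H^1(X_\fl,G^\rmD)\cong\H^1(Y_\fl,C[I]^\rmD)$ because the representation is tame at $\infty$ — does not close this gap. First, tameness of the generic-fibre Galois representation says nothing about the local flat cohomology of the finite flat (generally non-\'etale) model $G^\rmD$ at the points of $X-Y$; you would need to control the localization sequence there, and no such control is offered. Second, the twist bookkeeping comes out wrong: with $\cV=\cO_X(m\infty)$ the dual complex is $\Omega_X(-q^em\infty)\to\Omega_X(-m\infty)$, whose Serre dual is $\H^1(X,\cO_X(m\infty))\to\H^1(X,\cO_X(q^em\infty))$, not the complex $\H^1(X,\cO_X)\to\H^1(X,\cO_X(\infty))$ appearing in (\ref{defsha}); no choice of $m$ fixes this, which is a symptom of the fact that the naive extension over $X$ computes something other than $\H^1(Y_\fl,C[I]^\rmD)$. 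Third, your Leray bookkeeping is off: since $\R^if_\ast G^\rmD=0$ for $i\neq1$, one has $\H^1(X_\fl,G^\rmD)=\H^0(X_\et,\R^1f_\ast G^\rmD)$, a kernel at the $\H^0$-level of the dual complex; the groups $\H^1(X,\cHom(F^\ast\cV,\Omega_X))$ you invoke contribute to $\H^2(X_\fl,G^\rmD)$, not to $\H^1$. The missing ingredient is precisely the quasi-isomorphism lemma comparing the complex of differentials on $Y$ with its canonically truncated extension to $X$; with that in hand the rest of your outline (Serre duality, adjointness of $\cart$ and $\rmF$, comparison with (\ref{defsha})) goes through as you describe.
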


\begin{proof}
The starting point of the proof is the exact sequence of sheaves of $A$-modules
\[
	0 \longrightarrow A \otimes_k \bG_{a} \overset{\partial}{\longrightarrow} 
		A \otimes_k \bG_{a} \overset{\alpha}{\longrightarrow}
	 	C \longrightarrow 0
\]
with $\partial( a\otimes f ) = a \otimes (f^q + tf) - ta \otimes f  $
and with $\alpha (a\otimes f) = \phi(a) f $. From this we derive
a short exact sequence 
\[
	0 \longrightarrow C[I]_Y \longrightarrow A/I \otimes_k \bG_{a} 
	\overset{\partial}{\longrightarrow} A/I \otimes_k \bG_{a} \longrightarrow 0.
\]
Using Theorem \ref{thmduality} we obtain a dual resolution: 
\[
	0 \longto \R^1\!f_\ast C[I]^\rmD \longto
		A/I \otimes_k \Omega_Y 
		\overset{\partial^\ast}{\longto}
		A/I \otimes_k \Omega_Y
	\longto 0
\]
of sheaves of $A$-modules on $Y_\et$, where $\partial^\ast = 1 \otimes (t+\cart) - t\otimes 1$.
Since 
$\R^i\!f_\ast C[I]^\rmD=0$ for $i\neq 1$,
taking global sections we obtain an exact sequence of $A$-modules
\begin{equation}\label{sha1}
	0 \longto \H^1( Y_\fl, C[I]^\rmD ) \longto
		A/I \otimes_k \Gamma( Y, \Omega_Y ) \overset{\partial^\ast}{\longto}
		A/I \otimes_k \Gamma( Y, \Omega_Y ).
\end{equation}

Now we claim that the natural inclusion of the complex
\[
	A/I \otimes_k \Gamma( X, \Omega_X(-\infty) ) \overset{\partial^\ast}{\longto}
	A/I \otimes_k \Gamma( X, \Omega_X )
\]
in the complex
\[
		A/I \otimes_k \Gamma( Y, \Omega_Y ) \overset{\partial^\ast}{\longto}
		A/I \otimes_k \Gamma( Y, \Omega_Y )
\]
is a quasi-isomorphism.
Indeed, the quotient has a filtration with intermediate quotients of the form
\[
	A/I \otimes_k \frac{  \Gamma( X, \Omega_X(n\infty) ) }
		{ \Gamma( X, \Omega_X((n-1)\infty) ) }
	\overset{\partial^\ast}{\longto}
	A/I \otimes_k \frac{\Gamma( X, \Omega_X((n+1)\infty) ) }
		{ \Gamma( X, \Omega_X(n\infty) ) }
\]
with $ n \in \bZ_{\geq 0}$. On these intermediate quotients we have that $1\otimes \cart$ and $t\otimes 1$ are zero, so that $\partial^\ast = 1\otimes t$, which is an isomorphism.

Hence we obtain from (\ref{sha1}) a new exact sequence
\[
	0 \longto \H^1( Y_\fl, C[I]^\rmD ) \longto
		A/I \otimes_k \Gamma( X, \Omega_X(-\infty) ) \overset{\partial^\ast}{\longto}
		A/I \otimes_k \Gamma( X, \Omega_X ).
\]
Under Serre duality the $q$-Cartier operator $\cart$ on $\Omega_X$ is adjoint to the $q$-Frobenius $\rmF$ on $\cO_X$, so we obtain a dual exact sequence
\[
	A/I \otimes_k \H^1( X, \cO_X ) \overset{\partial}{\longto} 
	A/I \otimes_k \H^1( X, \cO_X(\infty) ) \longto 
	\H^1( Y_\fl, C[I]^\rmD )^\vee \longto 0.
\]
Theorem \ref{thmcomparison} now follows by comparing this sequence with the sequence obtained by reducing (\ref{defsha}) modulo $I$.
\end{proof}

\bigskip
\bigskip
\small
\bibliographystyle{plain}
\bibliography{../master}

\end{document}